\newtheorem{theorem}{Theorem}
\newtheorem{proposition}{Proposition}[section]
\newtheorem{corollary}[proposition]{Corollary}
\newtheorem{lemma}[proposition]{Lemma}
\theoremstyle{break} 
\newenvironment{proof}%
{{\par\noindent \bf Proof. \nobreak}}%
{\nobreak \removelastskip \nobreak \hfill $\Box$ \medbreak}
{{\par\noindent \bf Proof lemma. \nobreak}}%
{\nobreak \removelastskip \nobreak \bf End proof lemma. \medbreak}
\newenvironment{remark}{\par \medskip \noindent {\bf Remark. }\nobreak}{\par \medskip}
\numberwithin{equation}{section}
\newcommand{\expo}{\mathrm{e}}
\newcommand{\ds}{\displaystyle}
\newcommand{\dd}{\mathrm{d}}
\newcommand{\E}{\mathbb{E}}
\newcommand{\csch}{\text{csch}}
\newcommand{\cev}[1]{\reflectbox{\ensuremath{\vec{\reflectbox{\ensuremath{#1}}}}}}
\title{Generative diffusion models from a PDE perspective}
\author{Fei Cao \footnotemark[1] \and Kimball Johnston \footnotemark[2] \and Thomas Laurent \footnotemark[3] \and Justin Le \footnotemark[2] \and S\'ebastien Motsch \footnotemark[2]}
\begin{document}

\maketitle

\footnotetext[1]{University of Massachusetts Amherst - Department of Mathematics and Statistics, 710 N Pleasant St, Amherst, MA 01003, USA}
\footnotetext[2]{Arizona State University - School of Mathematical and Statistical Sciences, 900 S Palm Walk, Tempe, AZ 85287, USA}
\footnotetext[3]{Loyola Marymount University, USA}

\tableofcontents

\begin{abstract}
Diffusion models have become the de facto framework for generating new datasets. The core of these models lies in the ability to reverse a diffusion process in time. The goal of this manuscript is to explain, from a PDE perspective, how this method works and how to derive the PDE governing the reverse dynamics as well as to study its solution analytically. By linking forward and reverse dynamics, we show that the reverse process's distribution has its support contained within the original distribution. Consequently, diffusion methods, in their analytical formulation, do not inherently regularize the original distribution, and thus, there is no generalization principle. This raises a question: where does generalization arise, given that in practice it does occur?
Moreover, we derive an explicit solution to the reverse process's SDE under the assumption that the starting point of the forward process is fixed. This provides a new derivation that links two popular approaches to generative diffusion models: stable diffusion (discrete dynamics) and the score-based approach (continuous dynamics). Finally, we explore the case where the original distribution consists of a finite set of data points. In this scenario, the reverse dynamics are explicit (i.e., the loss function has a clear minimizer), and solving the dynamics fails to generate new samples: the dynamics converge to the original samples. In a sense, solving the minimization problem exactly is {\it ``too good for its own good"} (i.e., an overfitting regime).
\end{abstract}

\section{Introduction}
\setcounter{equation}{0}

In recent years, generative diffusion models such as score-based generative modeling (SGM) and denoising diffusion probabilistic models (DDPM) have witnessed tremendous success in applications including but not limited to image generation \cite{dhariwal2021diffusion, saharia2022palette}, text generation \cite{li2022diffusion, gong2022diffuseq}, audio generation \cite{pmlr-v139-popov21a, kong2021diffwave}, video generation \cite{ho2022video}, and more \cite{poole2023dreamfusion, baranchuk2022labelefficient}. The ever-growing list of relevant literature is vast, and we refer to \cite{nichol2021improved,chen2023improved,chen2023sampling,ho2020denoising,sohl2015deep,song2020improved,song2019generative,song2020score,weng2021diffusion,dieleman2023perspectives,alexandre2023denoising} and references therein for a general overview of the diffusion models in machine learning.

In the most elementary scenario, the goal of a generative model is to produce a collection of new samples, denoted by $\{\widetilde{\bf x}_i\}_{i=1}^M$, based solely on a set of available data, denoted by $\{{\bf x}_i\}_{i=1}^N$. The aim is for the generated samples to closely resemble the original data while still being distinct\footnote{The Thai expression {\it ``same same, but different"} captures this concept well}. Typically, we assume that our available samples $\{{\bf x}_i\}_{i=1}^N$ are i.i.d. (independent and identically distributed) and follow an unknown probability distribution $\rho_0 \in \mathcal{P}(\mathbb{R}^d)$ (see figure \ref{fig:intro_generated_q}). A popular approach in statistics is to use a kernel density estimation technique to reconstruct an approximation, denoted by $q$, of $ρ_0$ from the original sample \cite{silverman2018density} (see figure \ref{fig:intro_generated_q}-A). Afterwards, samples are generated from $q$. This approach is quite effective in low dimensions but becomes ineffective in high dimensions (e.g., images) since one has to learn the structure of the data first.

To face the curse of dimensionality, a different approach has emerged using (variational) auto-encoders. The idea is to first {\it encode} or project each sample onto a latent space of lower dimension. This latent space can be interpreted as the key features of the sample. Afterwards, the features are {\it decoded} in a second step to generate a sample (see figure \ref{fig:intro_generated_q}-B).  Notice that in this approach, new samples are generated without the need to construct an approximate density $q$. Diffusion models operate similarly, as they also involve an encoding-decoding process. However, in this case, the encoding phase consists of gradually adding noise to each sample, transforming the (unknown) distribution $\rho_0$ into a standard Gaussian density $\mathcal{N}$. Then, a denoising procedure is performed to reconstruct the original data $\rho_0$ from pure noise by reversing (in time) the forward diffusion process. It is worth noting that the encoding in diffusion models is explicit (in the sense that it requires no training) and does not involve dimensionality reduction. The advantage of this approach is that there is no need to fix a-priori the dimensionality of the latent space. However, the method may be prone to overfitting (e.g., the generated dataset could end up being a mere replication of the training data).

\begin{figure}[ht]
  \centering
  \includegraphics[width=1.0\textwidth]{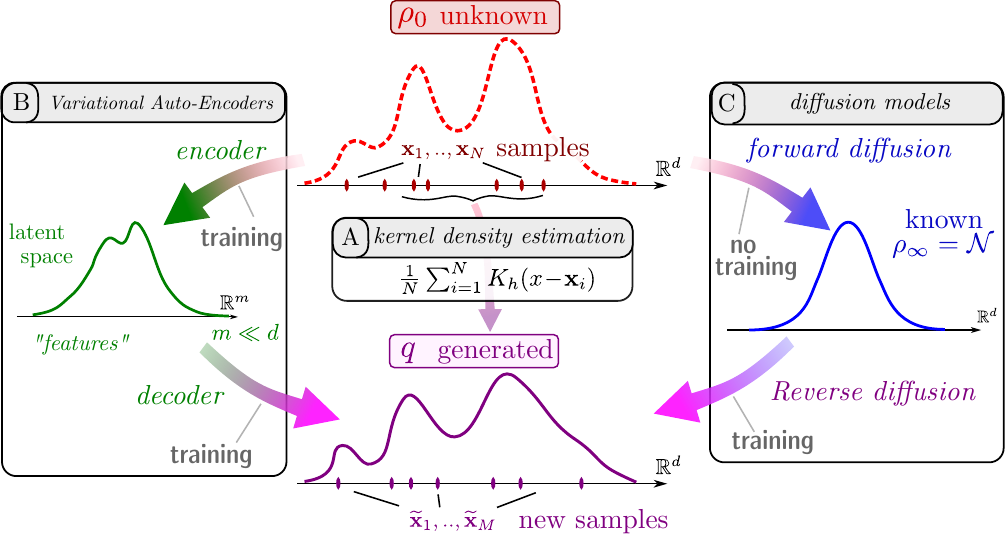}
  \caption{Illustration of various generative models. Given a data sample $\{{\bf x}_i\}_{i=1}^N$ from an unknown distribution $ρ_0$, the goal is to generate new samples $\{\widetilde{\bf x}_i\}_{i=1}^M$. In kernel density estimation (A), a density $q$ is first estimated, and new samples are then drawn from this estimated distribution. In the auto-encoder approach (B), an encoder-decoder network is trained to reconstruct the samples. Diffusion models (C) work similarly to the auto-encoder method, except only the decoder must be learned, and the encoder does not reduce dimensionality.}
  \label{fig:intro_generated_q}
\end{figure}

%
%

Despite the existence of a vast literature on diffusion models and related generative models, a majority of the models  require a heavy or advanced background in probability theory and SDEs to explain, design, adjust, and optimize (though some works derive these models from other points of view, including mean-field-games  \cite{zhang2023mean} and optimal transport \cite{de2021diffusion}). One of our main goals in the present manuscript is the re-introduction of the basic diffusion model from a PDE perspective. We believe that this pedestrian's guide to diffusion models, which highlights their PDE ingredients (and is also suitable for pedagogical purposes), will inspire novel research efforts to investigate the subtle power of such generative models from a PDE viewpoint.


The rest of the manuscript is organized as follows: in section \ref{sec:sec2}, we provide a PDE-based approach for the derivation of the reverse diffusion process from the forward diffusion process (i.e., the noising process that turns the data into Gaussian noise). Section \ref{sec:sec3} is devoted to a more traditional derivation of the reversed diffusion process from an SDE perspective. Our derivations in sections \ref{sec:sec2} and \ref{sec:sec3} are sufficiently elementary compared to the technical papers \cite{anderson1982reverse,haussmann1986time}. In section \ref{sec:sec4}, we convey the message that a perfect (reversed) diffusion process will not generalize well. This fact seems to be well-recognized in the machine learning community \cite{li2024good, Somepalli_2023_CVPR}, but we failed to locate a reference offering the precise, mathematical explanation of this phenomenon presented here. The argument presented in section \ref{sec:sec4} is entirely analytic (as opposed to probabilistic), which reveals the appealing advantage of a PDE approach. We provide a few numerical implementations of the diffusion models in section \ref{sec:sec5}, where the link between score-based generative modeling and stable diffusion is emphasized as well. Lastly, we give our conclusion in section \ref{sec:conclusion} along with a list of important open questions to be addressed in subsequent works.



\section{Reverse diffusion from PDE and SDE perspectives}

\subsection{A PDE approach to reverse diffusion}
\label{sec:sec2}
\setcounter{equation}{0}

\subsubsection{Derivation of the PDE}

We would like to introduce and derive the reverse PDE dynamics used in diffusion models. We will adopt a PDE approach as opposed to the SDE approach frequently encountered in relevant literature \cite{anderson1982reverse,haussmann1986time}. An overview of the problem is given in figure \ref{fig:X_rho_PDE}. There are two processes at play. First, the forward diffusion process transforms a density distribution $ρ_0$ into a standard normal distribution $\mathcal{N}$:
\begin{equation}
  \hspace{-1.5cm} \begin{array}{c}
    \text{\bf Forward}\\
    \text{\bf diffusion}
    \end{array} \quad \qquad \left\{ \begin{array}{l}
      ∂_t ρ = ∇⋅(xρ) + Δρ \\
      ρ(t=0) = ρ_0.
    \end{array}
  \right.
  \label{eq:forward_PDE}
\end{equation}
This process is known as the Ornstein-Uhlenbeck equation (or the Langevin equation). The law $ρ$ converges exponentially quickly toward the normal distribution under various metrics \cite{bakry2014analysis}. In practice, the forward diffusion is only solved up to a finite time $T_*$, so the resulting distribution $ρ(.,T_*)$ is only exponentially close to the equilibrium distribution $\mathcal{N}$.

\begin{figure}[ht]
  \centering
  \includegraphics[width=.7\textwidth]{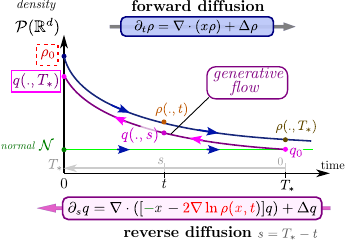}
  \caption{The forward process as a PDE \eqref{eq:forward_PDE} transforms any distribution $ρ_0$ into a normal distribution $\mathcal{N}$. The reverse process does the opposite and transforms a normal distribution $\mathcal{N}$ into (almost) $ρ_0$.}
  \label{fig:X_rho_PDE}
\end{figure}

The second step is to {\it reverse} the dynamics to transform a normal distribution $\mathcal{N}$ back into $ρ_0$. A naive way to carry out this step is simply to look at the evolution of $ρ(.,t)$ in reverse time by introducing the distribution $q(.,s)$ with
\begin{equation}
  \label{eq:reverse_rho}
  q(x,s) \coloneqq ρ(x,T_*-s) \quad \text{for } 0 \leq s \leq T_*.
\end{equation}
The starting point $s=0$ corresponds to the final time $t=T_*$. This distribution $q$ satisfies
\begin{equation}
  \label{eq:naive_pde}
  \hspace{-1.5cm} \begin{array}{c}
    \text{\bf Unstable}\\
    \text{\bf Reverse diffusion}
    \end{array} \quad \quad   \left\{
    \begin{array}{l}
      ∂_s q = -∇⋅(xq) - Δq \\
      q(x,s=0) = ρ(x,T_*).
    \end{array}
  \right.
\end{equation}
This PDE is ill-posed due to the anti-diffusion operator $-Δ$ (see figure \ref{fig:stable_reverse}). To make the PDE stable while preserving the property \eqref{eq:reverse_rho}, the key ingredient is to rewrite the diffusion process $Δ$ as a (non-linear) transport equation \cite{degond1989weighted}:
\begin{equation}
  \label{eq:diffusion_transport}
  Δφ = ∇⋅(∇φ) = ∇⋅(φ∇ \log φ).
\end{equation}
We then deduce
\begin{equation}
  \label{eq:trick_thomas}
  Δφ = 2∇⋅(φ∇ \log φ) - Δφ.
\end{equation}
If $q$ satisfies \eqref{eq:reverse_rho}, we obtain
\begin{equation}
  \label{eq:minus_delta_q}
  -Δq(x,s) =  -2∇⋅(q(x,s)∇ \log ρ(x,T_*-s)) + Δq(x,s).
\end{equation}

\begin{figure}[ht]
  \centering
  \includegraphics[width=.99\textwidth]{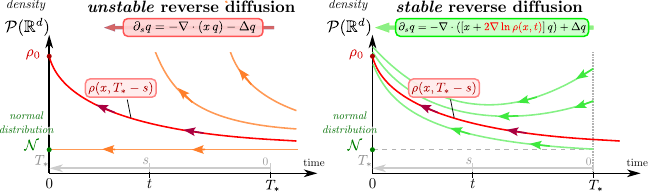}
  \caption{Reversing the forward process \eqref{eq:forward_PDE}  by reversing time in a naive way \eqref{eq:reverse_rho} yields an ill-posed problem \eqref{eq:naive_pde} even though the curve $s→ρ(x,T_*-s)$ is a solution.
  The {\it true} reverse dynamics \eqref{eq:reverse_diffusion} preserves this curve but also stabilizes the solution. As a consequence, the normal distribution $\mathcal{N}$ is no longer an equilibrium distribution for the (non-autonomous) reverse dynamics.}
  \label{fig:stable_reverse}
\end{figure}

This motivates the introduction of the reverse diffusion equation for $0\leq s \leq T_*$:
\begin{equation}
  \label{eq:reverse_diffusion}
   \begin{array}{c}
    \text{\bf Stable}\\
    \text{\bf Reverse diffusion}
    \end{array} \quad
  \left\{
    \begin{array}{l}
      ∂_s q = ∇⋅\left(\left[-x - 2∇ \log ρ(.,T_*-s)\right]q\right) + Δq \\
      q(x,s=0) = q_0(x).
    \end{array}
  \right.
\end{equation}
In short, we have stabilized the unstable reverse diffusion \eqref{eq:naive_pde} by adding twice the Laplace operator $Δ$ and removing twice the non-linear transport term. In particular, if we take $q_0(x) = ρ(x,T_*)$, then $q(x,s) = ρ(x,T_*-s)$ is the solution to the reverse diffusion \eqref{eq:reverse_diffusion}.

\subsubsection{Generalization}

There are other ways to stabilize the unstable reverse diffusion \eqref{eq:naive_pde}. For instance, one can add the Laplace operator $Δ$ and remove the non-linear transport term just once each, leading to the following PDE:
\begin{equation}
  ∂_s q = ∇⋅\big([-x - ∇ \log ρ(.,T_*-s)]q\big).
  \label{eq:reverse_no_diff}
\end{equation}
As there is no diffusion operator, the associated stochastic process will be deterministic (see further discussions in \cite{karras2022elucidating}).

Another generalization is to consider a different forward operator. Instead of using the Ornstein-Uhlenbeck equation \eqref{eq:forward_PDE}, one can simply use a Brownian motion and its associated generator $Δ$, leading to the following forward and reverse PDEs:
\begin{eqnarray}
  \label{eq:forward_basic_diffusion}
  &&∂_t ρ = Δρ,\\
  &&∂_s q = -∇⋅\big(2∇ \log ρ(.,T_*-s)\,q\big) + Δq.
\end{eqnarray}
Although this formulation simplifies the computations compared with the Ornstein-Uhlenbeck dynamics, there is nonetheless an additional requirement: one needs to introduce a bounded domain with boundary conditions so that the forward dynamics converges to a uniform equilibrium. Without the presence of a boundary, the solution to \eqref{eq:forward_basic_diffusion} will converge to the zero function \cite{evans2022partial}.

\subsection{An SDE approach to reverse diffusion}\label{sec:sec3}

We now present a more traditional derivation of the reverse dynamics from the SDE perspective. An overview of this approach is encapsulated in figure \ref{fig:X_rho_SDE}.

\begin{figure}[ht]
  \centering
  \includegraphics[width=.7\textwidth]{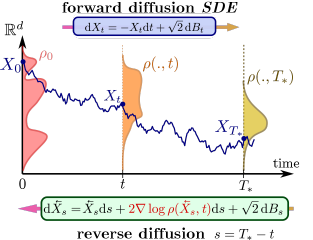}
  \caption{The forward process as an SDE \eqref{eq:SDE} transforms the initial law $\rho_0$ into a standard normal distribution $\mathcal{N}$, while the reverse process performs the opposite transformation.}
  \label{fig:X_rho_SDE}
\end{figure}

\subsubsection{Forward SDE}

In this subsection, we review the basic properties of the forward diffusion process. Consider a stochastic process $X_t$ in $ℝ^d$ satisfying the Ornstein-Uhlenbeck dynamics:
\begin{equation}
  \label{eq:SDE}
  \dd X_t = -X_t\,\dd t + \sqrt{2}\,\dd B_t,
\end{equation}
where $B_t$ is the standard Brownian motion in $ℝ^d$. Using an integrating factor, we obtain the explicit solution
\begin{equation}
  \label{eq:sol_OU}
  X_t = \mathrm{e}^{-t}\,X_0 + \sqrt{2} \mathrm{e}^{-t} ∫_{0}^t \mathrm{e}^z \,\dd B_z  \;\;\sim \mathcal{N}(\mathrm{e}^{-t}X_0,σ^2_t \text{Id}),
\end{equation}
where $\mathcal{N}$ is a Gaussian distribution and $σ^2_t \coloneqq 1-\mathrm{e}^{-2t}$. Thanks to this explicit formula, we can show the convergence of $X_t$ toward the standard normal distribution $\mathcal{N}({\bf 0},\text{Id})$ in various senses \cite{bakry2014analysis}.

The process $X_t$ naturally induces an operator $L$ (its {\it generator}) \cite{evans2012introduction,oksendal2013stochastic}
\begin{equation}
  \label{eq:L}
  L[φ](x) = -x⋅∇φ(x) + Δφ(x).
\end{equation}
In particular, for a given test function $φ$, if we consider the function 
\begin{equation}
  \label{eq:u}
  u(x,t) = \E[φ(X_t) \mid X_0=x],
\end{equation}
then it follows from It\^{o}'s formula that $u$ satisfies the Kolmogorov {\it backward} equation:
\begin{equation}
  \label{eq:kolmo_backward}
  \left\{
    \begin{array}{l}
      ∂_t u = L[u] \\
      u(x,t=0) = φ(x).
    \end{array}
  \right.
\end{equation}

\begin{remark}
  The terminology {\it backward} here might be confusing. The Kolmogorov {\it backward} equation \eqref{eq:kolmo_backward} follows the process $X_t$, while the law $ρ(x,t)$ of $X_t$ satisfies the Kolmogorov {\it forward} equation:
  \begin{equation}
    \label{eq:L_star}
    ∂_t ρ = L^*[ρ], \qquad L^*[φ](x) \coloneqq ∇⋅\big(x\,φ(x)\big) + Δφ(x),
  \end{equation}
  which is precisely \eqref{eq:forward_PDE}. Both equations evolve {\it forward} in time.
\end{remark}

To conclude our review of the forward process, we consider the process with a terminal condition at a fixed time $T_*$:
\begin{equation}
  \label{eq:v}
  v(x,t) = \E[φ(X_{T_*}) \mid X_t=x].
\end{equation}
This satisfies the Feynman-Kac formula \cite{evans2012introduction}:
\begin{equation}
  \label{eq:Feynmann_Kac}
  \left\{
    \begin{array}{l}
      ∂_t v = -L[v] \\
      v(x,t=T_*) = φ(x).
    \end{array}
  \right.
\end{equation}
We will need this property to derive the SDE for the reserve process.

\subsubsection{Reverse SDE}

Now we are ready to derive the reverse diffusion equation \eqref{eq:reverse_diffusion} using stochastic processes. We consider the reverse (in time) trajectories
\begin{equation}
  \label{eq:reverse_SDE}
  \cev{X}_s = X_{T_*-s},
\end{equation}
where $X_t$ is the solution to the forward diffusion \eqref{eq:SDE}. Our goal is to find the generator associated with the processes $\cev{X}_s$. The traditional approach to finding the generator is to consider
\begin{equation}
  \label{eq:u_reverse}
  \cev{u}(x,s) = \E[φ(\cev{X}_s) \mid \cev{X}_0=x]
\end{equation}
and then take the time derivative. However, in this case, it is more convenient to consider the function with a terminal condition:
\begin{eqnarray}
  \label{eq:v_reverse}
  \cev{v}(x,s) &=& \E[φ(\cev{X}_{T_*}) \,\mid\, \cev{X}_s=x] \\
  \label{eq:v_reverse2}
               &=& \E[φ(X_{0}) \,\mid\, X_t=x],
\end{eqnarray}
using \eqref{eq:reverse_SDE} with $t \coloneqq T_*-s$. Notice that we first have to fix the initial condition $ρ_0$ for the law of $X_0$. To write the function $\cev{v}$ explicitly, we introduce the joint law of the pairwise process:
\begin{equation}
  \label{eq:joint_law}
  ρ(x,t_1\,;\,y,t_2) = \text{``joint law of $X_{t_1}$ and $X_{t_2}$''}.
\end{equation}
In particular,
\begin{equation}
  \label{eq:joint_law_at_0}
  ρ(x,0\,;\,y,t) = ρ(y,t \mid x,0)⋅ρ_0(x).
\end{equation}
We can now derive the equation satisfied by $\cev{v}$.

\begin{proposition}
  \label{ppo:cev_v}
  The function $\cev{v}(x,s)$ \eqref{eq:v_reverse} satisfies the following Kolmogorov backward equation: 
  \begin{equation}
    \label{eq:cev_v_pde}
    \left\{
      \begin{array}{l}
        ∂_s \cev{v} = \left(-x - 2∇ \log ρ(x,T_*-s)\right)⋅∇\cev{v} - Δ\cev{v} \\
        \cev{v}(x,s=T_*) = φ(x).
      \end{array}
    \right.
  \end{equation}
\end{proposition}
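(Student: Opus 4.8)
The plan is to write $\cev{v}$ explicitly in terms of transition densities and then differentiate the resulting quotient. Starting from \eqref{eq:v_reverse2}, Bayes' rule and \eqref{eq:joint_law_at_0} give
\[
  \cev{v}(x,s) \;=\; \E[\varphi(X_0)\mid X_{t}=x] \;=\; \frac{1}{\rho(x,t)}\int_{\R^d}\varphi(y)\,\rho(x,t\mid y,0)\,\rho_0(y)\,\dd y, \qquad t\coloneqq T_*-s .
\]
Denote this as $\cev{v}(x,s)=N(x,t)/D(x,t)$, with numerator $N(x,t)=\int \varphi(y)\,\rho(x,t\mid y,0)\,\rho_0(y)\,\dd y$ and denominator $D(x,t)=\rho(x,t)$.

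The first step is to observe that \emph{both} $N$ and $D$ solve the forward Kolmogorov equation \eqref{eq:forward_PDE} (equivalently $\partial_t\,\cdot=L^*[\cdot]$, see \eqref{eq:L_star}) in the variables $(x,t)$. For $D=\rho(\cdot,t)$ this is exactly \eqref{eq:forward_PDE}. For $N$ it follows by superposition: for each fixed $y$ the transition kernel $x\mapsto\rho(x,t\mid y,0)$ solves \eqref{eq:forward_PDE} with initial datum $\delta_y$, and since $L^*$ is linear, integrating against the fixed, $t$-independent weight $\varphi(y)\rho_0(y)\,\dd y$ preserves the equation: $\partial_t N=\nabla\cdot(xN)+\Delta N$. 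Because the kernel $\rho(x,t\mid y,0)$ is the smooth, strictly positive Gaussian density read off from \eqref{eq:sol_OU} (with $\sigma_t^2>0$ for $t>0$), differentiation under the integral sign and division by $D>0$ are legitimate for $0\le s<T_*$, while $s=T_*$ is recovered by continuity.

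The second step is a direct computation with $w\coloneqq N/D$; note $\cev{v}(\cdot,s)=w(\cdot,T_*-s)$, so $\partial_s\cev{v}=-\partial_t w$. Expanding $\partial_t w=\partial_t N/D-N\,\partial_t D/D^2$, substituting the two forward equations, writing $N=wD$, and using the Leibniz identities $\Delta(wD)=(\Delta w)D+2\nabla w\cdot\nabla D+w\Delta D$ and $\nabla\cdot(xwD)=w\,\nabla\cdot(xD)+D\,x\cdot\nabla w$, one finds that the terms proportional to $\partial_t D/D$ cancel, leaving
\[
  \partial_t w \;=\; x\cdot\nabla w+\Delta w+2\,\frac{\nabla D}{D}\cdot\nabla w \;=\; \bigl(x+2\nabla\log\rho(x,t)\bigr)\cdot\nabla w+\Delta w .
\]
Substituting $t=T_*-s$ and $\partial_s\cev{v}=-\partial_t w$ yields precisely the PDE in \eqref{eq:cev_v_pde}; the terminal condition is immediate since at $s=T_*$ we have $t=0$, hence $\cev{v}(x,T_*)=\E[\varphi(X_0)\mid X_0=x]=\varphi(x)$.

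The main obstacle is bookkeeping rather than anything conceptual: one must keep the sign of the anti-diffusion term $-\Delta\cev{v}$ straight and carefully distinguish $\nabla\cdot(xN)$ from $\nabla\cdot(xD)$ when applying the product rule, since it is exactly the surviving cross term $2(\nabla\log\rho)\cdot\nabla w$ — the score — that separates the genuine reverse dynamics from the naive time-reversal \eqref{eq:naive_pde}. The only other point meriting an explicit word is the regularity and positivity of the Ornstein--Uhlenbeck transition kernel, which is what justifies the formal manipulations above on $0\le s<T_*$.
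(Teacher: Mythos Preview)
Your proof is correct and follows essentially the same route as the paper: both start from the Bayes representation $\rho(x,t)\,\cev{v}(x,s)=\int\varphi(y)\,\rho(x,t\mid y,0)\,\rho_0(y)\,\dd y$, use that each side satisfies the forward equation $\partial_t=L^*$, and then reduce via the Leibniz product rule (which the paper packages as a separate lemma, $L^*[p\varphi]=L^*[p]\varphi+(xp+2\nabla p)\cdot\nabla\varphi+p\Delta\varphi$, while you apply it inline after writing $\cev{v}=N/D$). The only cosmetic difference is that the paper differentiates the product $\rho\cev{v}$ whereas you differentiate the quotient $N/D$; the algebra is identical.
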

\begin{proof}
  Let's start by writing $\cev{v}$ using the joint law \eqref{eq:joint_law}:
  \begin{eqnarray}
    \cev{v}(x,s) &=& ∫_{y∈ℝ^d} φ(y)ρ(y,0  \mid x,t)\,\dd y \\
                 &=& ∫_{y∈ℝ^d} φ(y)ρ(x,t  \mid y,0) \frac{ρ_0(y)}{ρ(x,t)}\,\dd y
  \end{eqnarray}
  using Bayes' formula. This leads us to
  \begin{equation}
    \label{eq:cev_v_equal}
    ρ(x,t)\,\cev{v}(x,s) = ∫_{y∈ℝ^d} φ(y)\,ρ(x,t  \mid y,0)\,ρ_0(y) \,\dd y.
  \end{equation}
  Taking the derivative with respect to $s$ yields
  \begin{eqnarray}
    -∂_tρ\, \cev{v}\;+\; ρ\,∂_s \cev{v} = -∫_{y∈ℝ^d} φ(y)\,∂_tρ(x,t  \mid y,0)\,ρ_0(y) \,\dd y.
  \end{eqnarray}
  Using the fact that $ρ$ satisfies the Kolmogorov forward equation \eqref{eq:L_star}, we obtain
  \begin{eqnarray}
    -L^*[ρ]\, \cev{v}\;+\; ρ\,∂_s \cev{v} = -∫_{y∈ℝ^d} φ(y)\,L^*[ρ(x,t  \mid y,0)]\, ρ_0(y) \,\dd y.
  \end{eqnarray}
  Using the identity \eqref{eq:cev_v_equal}, we arrive at
  \begin{eqnarray}
    -L^*[ρ]\, \cev{v}\;+\; ρ\,∂_s \cev{v} = -L^*[ρ \cev{v}].
  \end{eqnarray}
  We then conclude thanks to Lemma \ref{lem:L_product} below.
\end{proof}

\begin{lemma}
  \label{lem:L_product}
  For any smooth functions $p$ and $φ$,
  \begin{equation}
    \label{eq:L_product}
    L^*[pφ] = L^*[p]φ + (xp+2∇p)\cdot ∇φ + pΔφ.
  \end{equation}
\end{lemma}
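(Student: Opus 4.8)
The plan is to prove this by a direct computation, expanding both terms in the definition $L^*[\psi] = \nabla\cdot(x\,\psi) + \Delta\psi$ applied to the product $\psi = p\varphi$ and then regrouping. There is no real obstacle here; the statement is an algebraic identity that follows from the Leibniz rules for the divergence and for the Laplacian, so the work is entirely bookkeeping. The one place to be slightly careful is the cross term produced by the Laplacian of a product, which is what ultimately supplies the $2\nabla p$ contribution in the stated formula.

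First I would handle the transport part. Writing $x\,p\varphi = \varphi\,(x p)$ as a scalar times a vector field and applying the product rule for the divergence,
\begin{equation}
  \nabla\cdot(x\,p\varphi) = \varphi\,\nabla\cdot(xp) + (xp)\cdot\nabla\varphi .
\end{equation}
Next I would expand the diffusion part using the product rule for the Laplacian,
\begin{equation}
  \Delta(p\varphi) = \varphi\,\Delta p + 2\,\nabla p\cdot\nabla\varphi + p\,\Delta\varphi .
\end{equation}

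Finally I would add the two displays and regroup the terms by which factor ($\varphi$, $\nabla\varphi$, or $\Delta\varphi$) they carry. Collecting the coefficients of $\varphi$ gives $\nabla\cdot(xp) + \Delta p = L^*[p]$; the gradient terms combine as $(xp)\cdot\nabla\varphi + 2\nabla p\cdot\nabla\varphi = (xp + 2\nabla p)\cdot\nabla\varphi$; and the remaining term is simply $p\,\Delta\varphi$. This yields exactly \eqref{eq:L_product}, completing the proof. The only subtlety worth a remark is that the asymmetry between $p$ and $\varphi$ in the formula comes entirely from the fact that we expand $L^*$ around $p$ rather than $\varphi$; expanding around $\varphi$ would give the symmetric-looking but equivalent identity with the roles swapped.
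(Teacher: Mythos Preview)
Your proof is correct and follows essentially the same approach as the paper: both expand $L^*[p\varphi] = \nabla\cdot(xp\varphi) + \Delta(p\varphi)$ directly via the Leibniz rules for the divergence and the Laplacian, then regroup. Your write-up is slightly more detailed in explaining the regrouping step, but the argument is identical.
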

\begin{proof}
  The proof follows from direct computation:
  \begin{eqnarray}
    L^*[pφ] &=& ∇⋅(xpφ) + Δ(pφ) \\
            &=& ∇⋅(xp)φ + xp⋅∇φ + Δp\,φ + 2∇p⋅∇φ + pΔφ.
  \end{eqnarray}
\end{proof}
From Proposition \ref{ppo:cev_v} and the Feynman-Kac formula \eqref{eq:Feynmann_Kac}, we conclude that the generator of the process $\cev{X}_s$ is a time-dependent operator:
\begin{equation}
  \label{eq:R}
  R[φ(x),s] \coloneqq \left(x + 2∇ \log ρ(x,T_*-s)\right)⋅∇φ(x) + Δφ(x).
\end{equation}
We then deduce that the law $q(x,s)$ of the process $\cev{X}_s$ satisfies
\begin{equation}
  \label{eq:ds_q}
  ∂_s q(x,s) = R^*[q(x,s),s],
\end{equation}
with the operator
\begin{equation}
  \label{eq:R_adjoint}
  R^*[q(x),s] \coloneqq ∇⋅\Big( \big(-x - 2∇ \log ρ(x,T_*-s)\big)\,q(x)\Big) + Δq(x),
\end{equation}
thus recovering the reverse diffusion equation \eqref{eq:reverse_diffusion}.

\section{Reverse diffusion does {\it not} generalize}\label{sec:sec4}
\setcounter{equation}{0}


\subsection{The past of \texorpdfstring{$ρ$}{rho} is the future of \texorpdfstring{$q$}{q}}


Let's consider the forward and reverse dynamics together. Fix two initial distributions $ρ_0$ and $q_0$ along with a horizon time $T_*$, and then consider the forward process $X_t$ with generator $L$ \eqref{eq:L} and the reverse process $\cev{X}_s$ for $0\leq s \leq T_*$ with generator $R$ \eqref{eq:R}. Their respective laws $ρ(x,t)$ and $q(x,s)$ are solutions to
\begin{equation}
  \label{eq:forward_reverse}
  \left\{
    \begin{array}{l}
      ∂_t ρ = L^*[ρ] \\
      ρ(x,t=0) = ρ_0(x)
    \end{array}
  \right. \qquad \textrm{and} \qquad  \left\{
    \begin{array}{l}
      ∂_s q = R^*[q,s] \\
      q(x,s=0) = q_0(x),
    \end{array}
  \right.
\end{equation}
where the operators $L^*$ and $R^*$ are defined by \eqref{eq:L_star} and \eqref{eq:R_adjoint}, respectively.

As we have already introduced the joint law $ρ(. ,t_1;. ,t_2)$ of the forward process \eqref{eq:joint_law}, we now denote the joint law for the reverse process as
\begin{equation}
  \label{eq:joint_law_q}
  q(x,s_1\,;\,y,s_2) = \text{``joint law of $\cev{X}_{s_1}$ and $\cev{X}_{s_2}$''}.
\end{equation}

\begin{remark}
  Notice that the pairwise distribution $ρ(.;.)$ is time-homogeneous while $q(.;.)$ is not. In other words, we have the property that for $t_1<t_2$ and any $t_0$,
  \begin{equation}
    ρ(y,t_2 \mid x,t_1) = ρ(y,t_2+t_0 \mid x,t_1+t_0).
  \end{equation}
  This is not the case for $q(.\mid.)$, which reflects the time dependency of its generator $R$ \eqref{eq:R}.
\end{remark}

We can link the two joint laws \eqref{eq:joint_law} and \eqref{eq:joint_law_q}: backtracking into the past of $ρ$ is equivalent to looking into the future of $q$ (see the illustration in figure \ref{fig:past_future}).

\begin{figure}[ht]
  \centering
  \includegraphics[width=.99\textwidth]{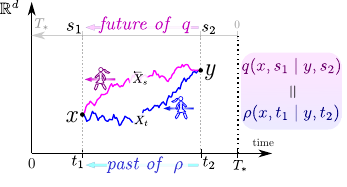}
  \caption{Illustration of Theorem \ref{thm:past_future} and the identity \eqref{eq:past_future}. The (density of the) probability that $X_{t_1}=x$ (past) knowing that $X_{t_2}=y$ (future) is the same as the probability that $\cev{X}_{s_1}=x$ (future) knowing that $\cev{X}_{s_2}=y$ (past).}
  \label{fig:past_future}
\end{figure}

\begin{theorem}
  \label{thm:past_future}
  For any $0\leq t_1 < t_2 \leq T_*$ and any $x,y∈ℝ^d$, we have
  \begin{equation}
    \label{eq:past_future}
    ρ(x,t_1 \mid y,t_2) = q(x,s_1 \mid y,s_2),
  \end{equation}
  with $s_1=T_*-t_1$ and $s_2=T_*-t_2$.
\end{theorem}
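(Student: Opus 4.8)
The plan is to fix the ``future'' data $y$ and $t_2$ (so $s_2=T_*-t_2$ is fixed too) and to show that the two sides of \eqref{eq:past_future}, viewed as functions of the remaining time variable, solve the same Cauchy problem. Set $h(x,t_1):=\rho(x,t_1\mid y,t_2)$ for $t_1\in[0,t_2]$ and $g(x,s_1):=q(x,s_1\mid y,s_2)$ for $s_1\in[s_2,T_*]$; note that $s_1=T_*-t_1$ sweeps out exactly $[s_2,T_*]$ as $t_1$ sweeps $[0,t_2]$. Since $g$ is by construction the transition density of the reverse process, it is the fundamental solution of the non-autonomous Fokker--Planck equation $\partial_{s_1}g=R^*[g,s_1]$ with pole at $(y,s_2)$, i.e. the solution with initial datum $g(\cdot,s_2)=\delta_y$. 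The theorem then reduces to showing that $h$, expressed in the variable $s_1=T_*-t_1$, solves the same problem.

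For this I would proceed exactly as in the proof of Proposition \ref{ppo:cev_v}. By Bayes' rule,
\[
  \rho(y,t_2)\,h(x,t_1)\;=\;\rho(y,t_2\mid x,t_1)\,\rho(x,t_1).
\]
Writing $p(x)=\rho(x,t_1)$ and $w(x)=\rho(y,t_2\mid x,t_1)$, the factor $p$ solves the Kolmogorov forward equation $\partial_{t_1}p=L^*[p]$ \eqref{eq:L_star}, while $w$, regarded as a function of the conditioning point $(x,t_1)$, solves the Feynman--Kac/backward equation $\partial_{t_1}w=-L[w]$ (this is \eqref{eq:Feynmann_Kac} with horizon $t_2$ and terminal datum $\delta_y$). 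Differentiating the product gives $\rho(y,t_2)\,\partial_{t_1}h=w\,L^*[p]-p\,L[w]$. The key computation is then the pointwise identity
\[
  w\,L^*[p]-p\,L[w]\;=\;-\,R^*[wp,\,s_1],\qquad s_1=T_*-t_1,
\]
in which the $s_1$-dependence of $R^*$ enters only through $\nabla\log\rho(\cdot,T_*-s_1)=\nabla p/p$; this is a short expansion of the same type as Lemma \ref{lem:L_product}, where the $\nabla w\cdot\nabla p$ cross terms cancel and everything else matches up to a sign. Because $\rho(y,t_2)$ is constant in $x$, this yields $\partial_{t_1}h=-R^*[h,s_1]$, hence $\partial_{s_1}h=R^*[h,s_1]$ after the change of variable $s_1=T_*-t_1$. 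Since $h(x,t_2)=\rho(x,t_2\mid y,t_2)=\delta_y(x)=g(x,s_2)$, the functions $h$ and $g$ solve the same Fokker--Planck Cauchy problem on $[s_2,T_*]$, and uniqueness gives $h\equiv g$, which is \eqref{eq:past_future}. Heuristically the statement is already ``clear'' from $\cev{X}_s=X_{T_*-s}$ in \eqref{eq:reverse_SDE}, but since the generator $R$ was obtained only formally and $q_0$ is arbitrary here, the PDE verification above is the clean route.

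The step I expect to be the main obstacle is the well-posedness/uniqueness invoked at the end. The drift $\nabla\log\rho(\cdot,T_*-s_1)$ is the score of the forward flow: it is smooth with controlled (essentially Gaussian) behaviour for $s_1<T_*$, but can become singular as $s_1\to T_*$ (i.e. $t_1\to 0$, where $\rho(\cdot,t_1)$ concentrates on $\mathrm{supp}\,\rho_0$), and the ``initial datum'' is a Dirac mass, so the comparison of $h$ and $g$ is really a statement about fundamental solutions rather than classical ones. I would handle this either by (i) restricting to $t_1\in(0,t_2]$, where $\rho(\cdot,t_1)$ is smooth and strictly positive so that standard parabolic uniqueness applies, and then letting $t_1\downarrow 0$ by continuity; or (ii) working weakly — testing the identity against arbitrary smooth compactly supported $\phi(x)$ and reducing to the already-established backward equation for $\cev{v}$ in Proposition \ref{ppo:cev_v}. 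One should also note explicitly that $q(x,s_1\mid y,s_2)$ depends only on the generator $R$ and not on the initial law $q_0$ in \eqref{eq:forward_reverse}, which is what makes the statement consistent for arbitrary $q_0$.
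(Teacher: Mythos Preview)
Your argument is correct: the identity $wL^*[p]-pL[w]=-R^*[wp,s_1]$ does hold (the $\nabla w\cdot\nabla p$ cross terms indeed cancel as you say), and from it the Fokker--Planck equation $\partial_{s_1}h=R^*[h,s_1]$ with initial datum $\delta_y$ follows, giving \eqref{eq:past_future} by uniqueness.

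The paper takes the adjoint route. Instead of fixing $(y,t_2)$ and tracking the conditional density $h(x,t_1)$ as a function of $(x,s_1)$, it fixes $t_1$ and a test function $\varphi$, sets $a(y,s)=\int\varphi(x)\,\rho(x,t_1\mid y,T_*-s)\,\dd x$, and shows that $a$ satisfies the \emph{backward} equation $\partial_s a=-R[a,s]$ in the $(y,s)$ variables, with terminal condition $a(\cdot,s_1)=\varphi$. The Feynman--Kac representation then identifies $a(y,s)$ with $\mathbb{E}[\varphi(\cev{X}_{s_1})\mid\cev{X}_s=y]$, and since $\varphi$ is arbitrary the conditional densities agree. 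The algebraic core is the same Lemma~\ref{lem:L_product}-type computation you use; the difference is only which pair of variables is frozen and whether one lands on the forward or backward equation. The practical payoff of the paper's choice is exactly the point you flagged as your main obstacle: by working against smooth test functions and invoking Feynman--Kac, it avoids having to argue uniqueness for a parabolic equation with Dirac initial data and a drift that may blow up at $s_1=T_*$. Your proposed workaround (i)---restrict to $t_1\in(0,t_2]$ where everything is smooth and pass to the limit---is perfectly sound, but the backward/Feynman--Kac route gets there without that extra step.
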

\begin{proof}
  Fix $y∈ℝ^d$ and $t_1∈[0,T_*]$, and take a test function $φ$. Consider $s$ and $t$ such that $t_1\leq t = T_* - s \leq T_*$, meaning $0 \leq s \leq T_* - t_1 = s_1$. Define the function
  \begin{eqnarray}
    a(y,s) &=& ∫_{x∈ℝ^d} φ(x) ρ(x,t_1 \mid y,T_* - s)\,\dd x \\
           &=& \mathbb{E}[φ(X_{t_1})  \mid  X_{T_* - s}=y] \\
           &=& \mathbb{E}[φ(X_{t_1})  \mid  X_t=y].
  \end{eqnarray}
  Following the same strategy as in the proof of Proposition \ref{ppo:cev_v}, we find that
  \begin{equation}
    \partial_s a(y, s) = -R[a(y, s), s].
  \end{equation}
  Thus, we have that $a$ satisfies
  \begin{equation}
    \label{eq:pde_a}
    \left\{
      \begin{array}{l}
        ∂_s a(y,s) = -R[a(y,s),s] \\
        a(y,s = T_* - t_1 = s_1) = φ(y),
      \end{array}
    \right.
  \end{equation}
  where $0 \leq s \leq s_1$.

  Now, an application of the Feynman-Kac formula (see for instance Proposition 3.41 in \cite{pardoux2014stochastic}) yields
  \begin{equation}
    a(y,s) = \mathbb{E}[φ(Y_{s_1})  \mid  Y_s=y],
  \end{equation}
  where $(Y_s)$ is a Markov process with generator $R$. On the other hand, we have already seen that $(\cev{X}_s)$ is exactly such a process. Hence, we conclude
  \begin{equation}
    a(y,s) = \mathbb{E}[φ(\cev{X}_{s_1})  \mid  \cev{X}_s=y].
  \end{equation}
  Finally, using the definition of $a$, we deduce that
  \begin{equation}\label{eq:conclusion}
    \mathbb{E}[φ(X_{t_1})  \mid  X_{t}=y] = \mathbb{E}[φ(\cev{X}_{s_1})  \mid  \cev{X}_s=y],
  \end{equation}
  where $t_1 = T_* - s_1$ and $t = T_* - s$.

  If we set $t_2 = t$ and $s_2 = s = T_* - t_2$, then since the equation \eqref{eq:conclusion} is satisfied for any test function $φ$, we arrive at the advertised identity \eqref{eq:past_future}.
\end{proof}

\subsection{Generated distribution \texorpdfstring{$q$}{q}}\label{sec:gen_distribution}

Thanks to Theorem \ref{thm:past_future}, we deduce the behavior of the reverse process $\cev{X}_s$ as $s$ approaches $T_*$ (i.e., when $t→0$).

\begin{corollary}
  For any $x,y∈ℝ^d$, the pairwise distributions of $ρ$ \eqref{eq:joint_law} and $q$ \eqref{eq:joint_law_q} satisfy
  \begin{equation}
    \label{eq:q_T_star_x0}
    q(y,T_* \mid x,0) =  ρ_0(y)⋅\frac{ρ(x,T_* \mid y,0)}{ρ(x,T_*)}.
  \end{equation}
\end{corollary}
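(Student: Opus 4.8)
The plan is to apply Theorem \ref{thm:past_future} directly with the specific choice of times $t_1 = 0$ and $t_2 = T_*$, and then rewrite the resulting conditional density using Bayes' formula together with the identity \eqref{eq:joint_law_at_0}. First I would set $t_1 = 0$, $t_2 = T_*$, so that $s_1 = T_* - t_1 = T_*$ and $s_2 = T_* - t_2 = 0$. Theorem \ref{thm:past_future} then gives, after swapping the roles of $x$ and $y$ to match the statement,
\begin{equation*}
  q(y, T_* \mid x, 0) = \rho(y, 0 \mid x, T_*).
\end{equation*}
So the corollary reduces to re-expressing $\rho(y,0 \mid x, T_*)$, the conditional density of the forward process at time $0$ given its value at time $T_*$, in the desired form.

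Next I would apply Bayes' formula to the forward joint law: since $\rho(y,0\,;\,x,T_*) = \rho(y,0\mid x,T_*)\,\rho(x,T_*) = \rho(x,T_*\mid y,0)\,\rho_0(y)$ (the last equality being exactly \eqref{eq:joint_law_at_0} with the labels adjusted), we solve for the quantity of interest:
\begin{equation*}
  \rho(y,0\mid x,T_*) = \rho_0(y)\cdot\frac{\rho(x,T_*\mid y,0)}{\rho(x,T_*)}.
\end{equation*}
Combining the two displays yields \eqref{eq:q_T_star_x0}. The only mild care needed is bookkeeping: Theorem \ref{thm:past_future} as stated requires $t_1 < t_2$, so strictly speaking one should check the endpoint case $t_1 = 0$, $t_2 = T_*$ is included (it is, since the theorem allows $0 \le t_1 < t_2 \le T_*$), and one should also make sure the marginal $\rho(x,T_*)$ is positive so the division is legitimate — this holds because $\rho(\cdot,T_*)$ is a Gaussian-type density with full support (indeed, by \eqref{eq:sol_OU} conditioned on $X_0$, it is a mixture of nondegenerate Gaussians).

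I do not expect any genuine obstacle here: this corollary is a one-line specialization of Theorem \ref{thm:past_future} plus Bayes' rule, and the ``hardest'' part is simply matching the variable names in \eqref{eq:past_future} and \eqref{eq:joint_law_at_0} to those in the corollary's statement. If I wanted to make the writeup fully self-contained I would recall explicitly that $\rho(x, T_* \mid y, 0)$ is the Gaussian transition kernel $\mathcal{N}(\mathrm{e}^{-T_*} y, \sigma_{T_*}^2 \mathrm{Id})$ from \eqref{eq:sol_OU}, but this is not needed for the identity itself.
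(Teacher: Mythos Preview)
Your proof is correct and uses the same two ingredients as the paper --- Theorem~\ref{thm:past_future} together with Bayes' formula --- but you apply the theorem directly at the endpoint $t_1=0$, whereas the paper works with a test function, invokes the theorem at a generic $t_1=t>0$, and then passes to the limit $t\to 0$. Since the hypothesis $0\le t_1<t_2\le T_*$ already covers $t_1=0$, your direct application is legitimate and slightly cleaner; the limiting argument in the paper is not strictly needed here.
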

\begin{proof}
  Let's again consider the function $\cev{u}$ \eqref{eq:u_reverse} with a test function $φ$:
  \begin{eqnarray}
    \cev{u}(x,s) &=& \E[φ(\cev{X}_s)  \mid  \cev{X}_0=x] = ∫_{y∈ℝ^d} φ(y)\, q(y,s\mid x,0)\,\dd y \\
                 &=& ∫_{y∈ℝ^d} φ(y)\, ρ(y,t\mid x,T_*)\,\dd y \\
                 &=& ∫_{y∈ℝ^d} φ(y)\, ρ(x,T_*\mid y,t)\frac{ρ(y,t)}{ρ(x,T_*)}\,\dd y \\
                 &\stackrel{s→T_*}{\longrightarrow}& ∫_{y∈ℝ^d} φ(y) ρ(x,T_*\mid y,0)\frac{ρ_0(y)}{ρ(x,T_*)}\,\dd y.
  \end{eqnarray}
  This completes the proof of \eqref{eq:q_T_star_x0}.
\end{proof}
As the law $ρ(x,T_*\mid y,0)$ is entirely explicit, i.e., $ρ(.,T_*\mid y,0) \sim \mathcal{N}(\expo^{-T_*}y,σ^2_{T_*}\text{Id})$ with $σ_{T_*}^2 = 1 -\expo^{-2T_*}$, we can even write an explicit formula:
\begin{equation}
  \label{eq:explicit_q_y_T_x_0}
  q(y,T_*\mid x,0) =  ρ_0(y) \frac{\exp\left(-\frac{|y\expo^{-T_*}-x|^2}{2σ_{T_*}^2}\right)}{∫_{z∈ℝ^d}ρ_0(z)\exp\left(-\frac{|z\expo^{-T_*}-x|^2}{2σ_{T_*}^2}\right)\,\dd z}.
\end{equation}

\begin{corollary}
  \label{cor:law_q_T_star}
  The law of $\cev{X}_s$ at the final time $s=T_*$ is given by
  \begin{equation}
    \label{eq:final_q}
    q(y,T_*) = ρ_0(y) ∫_{x∈ℝ^d} ρ(x,T_* \mid y,0) \frac{q_0(x)}{ρ(x,T_*)}\,\dd x.
  \end{equation}
\end{corollary}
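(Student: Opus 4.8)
The plan is to compute the law $q(y,T_*)$ by marginalizing the joint law of the reverse process over its starting point. Since $\cev{X}_{T_*}$ and $\cev{X}_0$ form a pair whose joint law is $q(x,0\,;\,y,T_*)$, and $\cev{X}_0$ has law $q_0$, we write
\begin{equation}
  q(y,T_*) = \int_{x\in\mathbb{R}^d} q(y,T_* \mid x,0)\,q_0(x)\,\dd x.
\end{equation}
This reduces the statement to plugging in a formula for the transition density $q(y,T_* \mid x,0)$, which is precisely the content of the previous corollary, equation \eqref{eq:q_T_star_x0}.

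Concretely, the first step is to invoke \eqref{eq:q_T_star_x0}, which gives
\begin{equation}
  q(y,T_* \mid x,0) = \rho_0(y)\cdot\frac{\rho(x,T_* \mid y,0)}{\rho(x,T_*)}.
\end{equation}
The second step is to substitute this into the marginalization integral above, pull the factor $\rho_0(y)$ (which does not depend on the integration variable $x$) out of the integral, and arrive directly at \eqref{eq:final_q}. This is essentially a one-line consequence once the conditional law \eqref{eq:q_T_star_x0} is in hand.

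The only genuine subtlety — and the step I would be most careful about — is justifying the marginalization identity $q(y,T_*) = \int q(y,T_*\mid x,0)\,q_0(x)\,\dd x$ itself, i.e., checking that the initial datum $q_0$ of the reverse PDE \eqref{eq:reverse_diffusion} really is the law of $\cev{X}_0$ and that the transition kernel $q(\cdot,T_*\mid\cdot,0)$ appearing in the previous corollary is the one governing the process with generator $R$. This is consistent with the setup in \eqref{eq:forward_reverse} and \eqref{eq:joint_law_q}, where $q(x,s)$ is defined as the law of $\cev{X}_s$ with $\cev{X}_0\sim q_0$, so the identity holds by the tower property / Chapman–Kolmogorov applied to the (non-homogeneous) Markov process $\cev{X}_s$. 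Alternatively, one could sidestep the probabilistic bookkeeping entirely and give a purely PDE-flavored argument: test $q(\cdot,T_*)$ against an arbitrary $\varphi$, use the representation $\cev{u}(x,T_*) = \int \varphi(y)\,q(y,T_*\mid x,0)\,\dd y$ from the proof of the previous corollary together with $\int \cev{u}(x,T_*)\,q_0(x)\,\dd x = \int \varphi(y)\,q(y,T_*)\,\dd y$, and then insert the limiting expression for $\cev u$ already computed there. Either way, the heavy lifting was done in Theorem \ref{thm:past_future} and its first corollary; this corollary is just the integrated form.
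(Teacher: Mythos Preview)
Your proposal is correct and matches the paper's proof essentially verbatim: the paper also writes $q(y,T_*) = \int_{x\in\mathbb{R}^d} q(y,T_* \mid x,0)\, q_0(x)\,\dd x$ and then substitutes \eqref{eq:q_T_star_x0}. Your extra care about justifying the marginalization identity is more thorough than the paper, which simply asserts it.
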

\begin{proof}
  We simply notice that
  \begin{equation}
    q(y,T_*)  = ∫_{x∈ℝ^d} q(y,T_* \mid x,0)\, q_0(x)\,\dd x
  \end{equation}
  and then combine it with \eqref{eq:q_T_star_x0}.
\end{proof}
We notice that if we choose $q_0(x)=ρ(x,T_*)$, then we recover the initial distribution for the forward process, in the sense that $q(y,T_*) = ρ_0(y)$.

\begin{remark}
As an immediate by-product of Corollary \ref{cor:law_q_T_star}, we deduce the pointwise convergence of $q(y,T_*)$ to $\rho_0(y)$ as $T_*→+∞$ (at least for smooth $\rho_0$). Other works have investigated bounds on the distance between $q(.,T_*)$ and $\rho_0$ under various hypothesis in a variety of senses, such as KL divergence \cite{song2021maximum}, total variation \cite{de2021diffusion}, and types of integral probability metrics \cite{mimikos2024score}. 
\end{remark}

On the other hand, the main consequence of Corollary \ref{cor:law_q_T_star} is the realization that the {\it exact} reverse diffusion does {\it not} generalize (see the illustration in figure \ref{fig:support_q_T}).

\begin{figure}[ht]
  \centering
  \includegraphics[width=.99\textwidth]{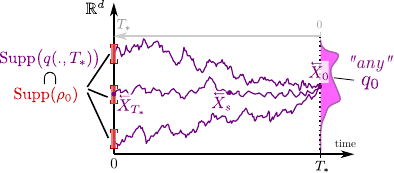}
  \caption{The support of the final distribution $q(.,T_*)$ is contained in the support of the original distribution $ρ_0$ (see Corollary \ref{cor:support}). As a consequence, the {\it exact} reverse diffusion does not generate new samples.}
  \label{fig:support_q_T}
\end{figure}

\begin{corollary}
  \label{cor:support}
  The support of $q(.,T_*)$ is contained in the support of $ρ_0$:
  \begin{equation}
    \label{eq:support}
    \text{Supp}\big(q(.,T_*)\big) ⊂ \text{Supp}(ρ_0).
  \end{equation}
  In other words, if $ρ_0(x)=0$, then $q(x,T_*)=0$.
\end{corollary}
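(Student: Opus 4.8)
The proof is essentially immediate from Corollary \ref{cor:law_q_T_star}, so the plan is mostly a matter of reading off the right factorization. First I would rewrite the formula \eqref{eq:final_q} as a product
$q(y,T_*) = \rho_0(y)\, I(y)$, where $I(y) \coloneqq \int_{x\in\mathbb{R}^d} \rho(x,T_* \mid y,0)\, \frac{q_0(x)}{\rho(x,T_*)}\,\mathrm{d}x$. The integrand is a product of nonnegative quantities: the Gaussian transition density $\rho(x,T_*\mid y,0)$, the probability density $q_0(x)$, and the reciprocal of $\rho(x,T_*)$, which is strictly positive everywhere since by \eqref{eq:sol_OU} the law $\rho(.,T_*)$ is a mixture of nondegenerate Gaussians. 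Hence $I(y) \geq 0$ for every $y$, and consequently $\rho_0(y) = 0$ forces $q(y,T_*) = 0$. This already gives the pointwise statement quoted at the end of the corollary.

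Second, to upgrade the pointwise statement to the claimed inclusion of supports, I would recall that $\mathrm{Supp}(\rho_0)$ is the complement of the largest open set on which $\rho_0$ vanishes. On that open set, $q(.,T_*)$ also vanishes by the previous paragraph, so this open set is contained in the complement of $\mathrm{Supp}(q(.,T_*))$; taking complements yields $\mathrm{Supp}(q(.,T_*)) \subset \mathrm{Supp}(\rho_0)$, which is \eqref{eq:support}.

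The only point that deserves a word of care — and the closest thing here to an obstacle — is the well-posedness of the product $\rho_0(y)\,I(y)$, since a priori $I(y)$ could equal $+\infty$ at some points. This is handled by observing that $q(.,T_*)$ is a genuine probability density (it is the law of $\cev{X}_{T_*}$), hence finite for almost every $y$, so that $I(y) < \infty$ for a.e.\ $y$ in the set where $\rho_0(y) > 0$; on the zero set of $\rho_0$ the value of $I(y)$ is irrelevant, as it is multiplied by $0$. With this remark the factorization argument is rigorous, and no further estimates are needed.
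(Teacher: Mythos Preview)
Your proposal is correct and follows essentially the same approach as the paper, which simply states that the corollary is a direct consequence of the formula \eqref{eq:final_q}. You have merely spelled out the factorization $q(y,T_*)=\rho_0(y)\,I(y)$ and added careful remarks about nonnegativity, supports, and finiteness that the paper leaves implicit.
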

\begin{proof}
  This is a direct consequence of the formula \eqref{eq:final_q}.
\end{proof}
In a practical scenario, the distribution $ρ_0$ is a sum of Dirac masses centered at the original collection of samples $\left\{{\bf x}_i\right\}_{1\leq i\leq N}$:
\begin{equation}
  \label{eq:rho_0_dirac_masses}
  ρ_0(x) = \frac{1}{N} ∑_{i=1}^N δ_{{\bf X}_i}(x).
\end{equation}
From Corollary \ref{cor:support}, we deduce that from any choice of $q_0$, the final distribution $q(.,T_*)$ has to be of the form
\begin{equation}
  \tilde{ρ}_0(x) = ∑_{i=1}^N ω_i\, δ_{{\bf X}_i}(x),
\end{equation}
with $\left\{ω_i\right\}_{1\leq i\leq N}$ a convex combination\footnote{$\omega_i \geq 0$ for all $1\leq i\leq N$ and $\sum_{i=1}^N \omega_i = 1$}. We can compute the weights $ω_i$ explicitly using the formula \eqref{eq:explicit_q_y_T_x_0}, but the key message is that the exact reverse diffusion will not create new samples.

\section{Reverse diffusion in practice}\label{sec:sec5}
\setcounter{equation}{0}

\subsection{Explicit reverse SDE}


We would like to provide a more practical expression of the reverse dynamics. Recall that the process $\cev{X}_s$ has its generator $R$ given by \eqref{eq:R}. Consequently, we conclude that $\cev{X}_s$ is driven by the following It\^o diffusion:

\begin{equation} \label{eq:rev_dynam}
  \dd\cev{X}_s = \cev{X}_s\,\dd s + 2\,\nabla \ln \rho\left (\cev{X}_s, T_*-s\right)\,\dd s + \sqrt{2}\,\dd B_s,
\end{equation}
where $\nabla \ln \rho$ is known as the {\it score} \cite{song2019generative} of the distribution $\rho$. To get a more explicit expression of the score $∇ \ln ρ$, we consider the law of $X_0$ conditioned on $X_t = x$ and denote the expected value
\begin{equation}
  \label{eq:exp_x0}
  \overline{\bf x}_0(x, t) = \mathbb{E}\left[X_0 \mid X_t = x\right].
\end{equation}
In analogy with the method of characteristics encountered in the study of transport-type PDEs,  $\overline{\bf x}_0$ represents the average ``foot'' of the characteristics passing through $x$ at time $t$ (see figure \ref{fig:lighthouse} for an illustration). Using the pairwise distribution $ρ(.\mid .)$ \eqref{eq:joint_law}, we can express $\overline{\bf x}_0$ explicitly and deduce an expression for the score.

\begin{lemma}
  \label{lem:x_hat}
  If $X_0 \sim \rho_0$, we can recast $\overline{\bf x}_0$ as
  \begin{equation}
    \label{eq:exp_x0_explicit}
    \overline{\bf x}_0(x, t) = \frac{\int_{y∈ℝ^d} y\,\rho(x, t \mid y, 0)\,\rho_0(y)\, \dd y}{\rho(x, t)},
  \end{equation}
  and the score $∇ \ln ρ$ as:
\begin{equation}
  ∇ \ln ρ(x,t) = \frac{\expo^{-t}\,\overline{\bf x}_0(x, t)-x}{1-\expo^{-2t}} .
  \label{eq:exp_grad_ln_rho}
\end{equation}
\end{lemma}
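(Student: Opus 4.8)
The plan is to prove the two formulas separately, starting with the explicit integral representation of $\overline{\bf x}_0$, then deriving the score formula from it.

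For the first formula, I would simply unpack the definition of conditional expectation. By definition, $\overline{\bf x}_0(x,t) = \mathbb{E}[X_0 \mid X_t = x]$, which means integrating $y$ against the conditional density $\rho(y, 0 \mid x, t)$. Using Bayes' formula (exactly as in the proof of Proposition~\ref{ppo:cev_v}), we rewrite $\rho(y,0 \mid x,t) = \rho(x, t \mid y, 0)\,\rho_0(y)/\rho(x,t)$, since the law of $X_0$ is $\rho_0$. This immediately gives \eqref{eq:exp_x0_explicit}. This step is routine.

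For the second formula, the idea is to exploit the fact that the transition density $\rho(x,t\mid y,0)$ is an explicit Gaussian, namely $\mathcal{N}(\expo^{-t}y, \sigma_t^2\,\text{Id})$ with $\sigma_t^2 = 1 - \expo^{-2t}$, as recorded in \eqref{eq:sol_OU}. The key observation is that for this Gaussian, the gradient in $x$ satisfies $\nabla_x \rho(x, t\mid y, 0) = -\frac{x - \expo^{-t} y}{\sigma_t^2}\,\rho(x,t\mid y,0)$. Now I would write $\rho(x,t) = \int \rho(x,t\mid y,0)\,\rho_0(y)\,\dd y$, differentiate under the integral sign with respect to $x$, and substitute the Gaussian gradient identity:
\begin{equation}
  \nabla_x \rho(x,t) = \int_{y\in\mathbb{R}^d} \left(-\frac{x - \expo^{-t} y}{\sigma_t^2}\right)\rho(x,t\mid y,0)\,\rho_0(y)\,\dd y.
\end{equation}
Splitting the integral into the term with $x$ (which pulls out $-x\,\rho(x,t)/\sigma_t^2$) and the term with $\expo^{-t} y$ (which gives $\expo^{-t}$ times the numerator of \eqref{eq:exp_x0_explicit}, i.e. $\expo^{-t}\,\overline{\bf x}_0(x,t)\,\rho(x,t)/\sigma_t^2$), then dividing by $\rho(x,t)$ and using $\nabla \ln \rho = \nabla \rho / \rho$, yields \eqref{eq:exp_grad_ln_rho}.

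I do not expect a genuine obstacle here; the only mild care needed is justifying differentiation under the integral sign (dominated convergence, using that the Gaussian and its $x$-derivative decay fast enough and $\rho_0$ is a fixed probability measure — and the identity extends to the Dirac-mass case \eqref{eq:rho_0_dirac_masses} by the same computation since there the integrals become finite sums). The heart of the argument is purely the log-derivative identity for a Gaussian transition kernel; everything else is bookkeeping.
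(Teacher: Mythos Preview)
Your proposal is correct and follows essentially the same route as the paper's own proof: Bayes' formula for the first identity, then the Gaussian log-derivative $\nabla_x \rho(x,t\mid y,0)=-(x-\expo^{-t}y)\rho(x,t\mid y,0)/\sigma_t^2$ combined with differentiation under the integral and the split into the $x$-term and the $y$-term for the second. The only addition is your remark on dominated convergence, which the paper leaves implicit.
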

\begin{proof}
  By Bayes' rule, we have
  \begin{displaymath}
    \rho(y, 0 \mid x, t) = \frac{\rho(x, t \mid y, 0)\,\rho_0(y)}{\rho(x, t)}.
  \end{displaymath}
  We then apply this equality in the definition of $\overline{\bf x}_0$:
  \begin{equation*}
    \overline{\bf x}_0(x, t) = \mathbb{E}\left[X_0 \mid X_t = x\right] = \int_{y∈ℝ^d}y\,\rho(y, 0 \mid x, t)\,\dd y
    = \frac{\int_{y∈ℝ^d} y\,\rho(x, t \mid y, 0)\,\rho_0(y)\, \dd y}{\rho(x, t)}.
  \end{equation*}
  Since $\rho(x, t \mid y, 0)$ is an explicit Gaussian density, we deduce:
  \begin{equation}
   \nabla_x\rho(x, t \mid y, 0) = \rho(x, t \mid y, 0)\cdot \left (-\frac{x-\alpha_ty}{\beta_t}\right )
    \label{eq:grad_rho}
  \end{equation}
  with $α_t=\expo^{-t}$ and $β_t=1-\expo^{-2t}$. Therefore we can rewrite the score $\nabla \ln \rho$ as follows:
  \begin{align*}
    \nabla \ln \rho(x, t ) &= \frac{\nabla \rho(x, t)}{\rho(x, t)}=\frac{\int_{y∈ℝ^d}\nabla_x \rho(x, t \mid y, 0)\,\rho_0(y)\,\dd y}{\rho(x, t)}\\
                           &= \frac{\int_{y∈ℝ^d}\rho(x, t \mid y, 0) \left (-\frac{x-\alpha_ty}{\beta_t}\right )\,\rho_0(y)\,\dd y}{\rho(x, t)} \\
                           &= \frac{-\frac{x}{\beta_t}\int_{y∈ℝ^d}\rho(x, t \mid y, 0)\, \rho_0(y)\,\dd y +\frac{\alpha_t}{\beta_t}\int_{y∈ℝ^d}y\,\rho(x, t \mid y, 0)\, \rho_0(y)\,\dd y}{\rho(x, t)} \\
                           &=-\frac{x}{\beta_t} + \frac{\alpha_t}{\beta_t}\,\overline{\bf x}_0(x, t).
  \end{align*}
\end{proof}

Thanks to the previous lemma, we can express the reverse diffusion process in terms of $\overline{\bf x}_0$.

\begin{proposition}
  \label{prop:xhat_dynam}
  The reverse diffusion dynamics \eqref{eq:rev_dynam} can be rewritten as
  \begin{equation}
    \label{eq:reverse_SDE_alternative_form}
    \dd\cev{X}_s = \cev{X}_s\,\dd s + 2\,\frac{\alpha_{t}\,\overline{\bf x}_0\left (\cev{X}_s, t\right ) - \cev{X}_s}{\beta_{t}}\,\dd s + \sqrt{2}\,\dd B_s,
  \end{equation}
  with $t=T_*-s$, $\alpha_t = \expo^{-t}$, $\beta_t = 1 - \expo^{-2t}$ and $\overline{\bf x}_0$ given by \eqref{eq:exp_x0}.
\end{proposition}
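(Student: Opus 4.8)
The plan is to obtain \eqref{eq:reverse_SDE_alternative_form} by a direct substitution of the explicit score formula from Lemma \ref{lem:x_hat} into the reverse SDE \eqref{eq:rev_dynam}. The SDE \eqref{eq:rev_dynam} is already in hand: it is the It\^o diffusion whose generator is the time-dependent operator $R$ of \eqref{eq:R}, i.e. the diffusion with drift $x + 2\nabla \ln \rho(x, T_*-s)$ and constant diffusion matrix $\sqrt{2}\,\text{Id}$. Hence the only quantity that needs rewriting is the score term $\nabla \ln \rho(\cev{X}_s, T_*-s)$, and everything reduces to bookkeeping of the time argument.

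First I would invoke \eqref{eq:exp_grad_ln_rho}, which gives $\nabla \ln \rho(x,t) = (\expo^{-t}\,\overline{\bf x}_0(x,t) - x)/(1-\expo^{-2t})$. Setting $t = T_*-s$ and $x = \cev{X}_s$, and writing $\alpha_t = \expo^{-t}$, $\beta_t = 1 - \expo^{-2t}$ as in the statement, this reads $\nabla \ln \rho(\cev{X}_s, T_*-s) = (\alpha_t\,\overline{\bf x}_0(\cev{X}_s, t) - \cev{X}_s)/\beta_t$. Substituting this identity into the drift of \eqref{eq:rev_dynam}, while leaving the $\cev{X}_s\,\dd s$ and $\sqrt{2}\,\dd B_s$ terms untouched, yields \eqref{eq:reverse_SDE_alternative_form} verbatim.

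There is essentially no obstacle here: the argument is a one-line substitution resting entirely on Lemma \ref{lem:x_hat}. The only points worth a brief remark are that the time argument is consistent throughout ($t = T_*-s$ in the score, in $\alpha_t$, $\beta_t$, and in $\overline{\bf x}_0$), and that $\beta_t > 0$ for $0 \le s < T_*$, so the drift is well-defined on the open interval over which the reverse dynamics is run — the apparent singularity at $s = T_*$, where $\beta_t \to 0$, is intrinsic to the reverse SDE and not something this reformulation is meant to address. For completeness one may also note that replacing the drift coefficient by an everywhere-equal expression does not alter the law of the solution, so \eqref{eq:rev_dynam} and \eqref{eq:reverse_SDE_alternative_form} describe the same process $\cev{X}_s$.
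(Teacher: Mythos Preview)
Your proposal is correct and matches the paper's approach exactly: the paper does not even write out a proof, simply noting that the proposition follows from Lemma~\ref{lem:x_hat}, which is precisely the one-line substitution you describe.
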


\begin{figure}[ht]
  \centering
  \includegraphics[width=.8\textwidth]{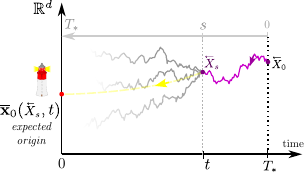}
  \caption{Solving the reverse diffusion written as a SDE \eqref{eq:reverse_SDE_alternative_form} requires computing  $\overline{\bf x}_0$ \eqref{eq:exp_x0}, the expected ``foot'' of the characteristics passing through point $x$ at time $t$. This information, like a lighthouse, guides the process $\cev{X}_s$ back to $ρ_0$.}
  \label{fig:lighthouse}
\end{figure}

\begin{remark}
  When $t \gg 1$ is large, the reverse diffusion reduces to the forward diffusion process \eqref{eq:SDE}:
  \begin{equation}
    \label{eq:reverse_SDE_simplified}
    \dd\cev{X}_s \stackrel{t \gg 1}{\approx} \cev{X}_s\,\dd s + 2\,\frac{{\bf 0} - \cev{X}_s}{1}\,\dd s + \sqrt{2}\,\dd B_s \;=\; -\cev{X}_s\,\dd s + \sqrt{2}\,\dd B_s.
  \end{equation}
  However, when $t\approx 0$, the reverse diffusion becomes a stiff relaxation towards $\overline{\bf x}_0$:
  \begin{equation}
    \label{eq:reverse_SDE_t0}
    \dd\cev{X}_s \stackrel{t \sim 0}{\approx} \frac{1}{t}\left(\overline{\bf x}_0 - \cev{X}_s \right)\,\dd s +  \mathcal{O}(1)\,\dd s + \sqrt{2}\,\dd B_s,
  \end{equation}
  with $\overline{\bf x}_0=\overline{\bf x}_0\left (\cev{X}_s, T_*-s\right )$ and $t=T_*-s$. If we neglect the $\mathcal{O}(1)$ term and assume that $\overline{\bf x}_0$ is constant, then this SDE can be explicitly solved by applying the It\^{o}'s formula to $\cev{X}_s/(T_*-s)$:
  \begin{equation}
    \dd\cev{X}_s = \frac{1}{t}\left(\overline{\bf x}_0 - \cev{X}_s \right)\,\dd s + \sqrt{2}\,\dd B_s \;\; ⇒ \;\; \cev{X}_s = \cev{X}_0 + \frac{t}{T_*}(\overline{\bf x}_0 - \cev{X}_0) + Z_s
  \end{equation}
  where $Z_s= \sqrt{2}\,t\,∫_{r=0}^s \frac{\dd B_r}{T_*-r}$ is a centered Gaussian distribution with covariance matrix $(2\,t\,s/T_*)⋅\text{Id}$. 
  Under these simplications, the process $\cev{X}_s$ converges linearly to $\overline{\bf x}_0$ as $s→T_*$ (i.e., $t→0$). It is similar to a Brownian bridge interpolating $\cev{X}_0$ and $\overline{\bf x}_0$ on the interval $s∈[0,T_*]$.
\end{remark}

\begin{remark}
  We can also rewrite the reverse diffusion process in ways that involve hyperbolic trigonometric functions. For instance, we have \begin{equation}\label{eq:reverse_SDE_alternative1}
    \dd \cev{X}_s = \frac{1}{\tanh(T_*-s)}\,\left(\frac{\overline{\bf x}_0\left(\cev{X}_s,T_*-s\right)}{\cosh(T_*-s)} - \cev{X}_s\right)\,\dd s + \sqrt{2}\,\dd B_s.
  \end{equation}
  Using the identity $\tanh(x/2) = \frac{\cosh(x)-1}{\sinh(x)}$ and the definition $\csch(x) = \frac{1}{\sinh(x)}$, equation \eqref{eq:reverse_SDE_alternative1} can also be rewritten as
  \begin{equation}\label{eq:reverse_SDE_alternative2}
    \begin{aligned}
      \dd\cev{X}_s &=-\tanh\left(\frac{T_*-s}{2}\right)\,\cev{X}_s\,\dd s+\csch(T_*-s)\left(\overline{\bf x}_0\left(\cev{X}_s,T_*-s\right)-\cev{X}_s\right)\,\dd s \\
                   &\qquad + \sqrt{2}\,\dd B_s.
    \end{aligned}
  \end{equation}
\end{remark}

\subsection{Simplified case: \texorpdfstring{$ρ_0$}{rho 0} Dirac (unique sample)}

The explicit reverse dynamics \eqref{eq:reverse_SDE_alternative_form} is in general nonlinear and therefore not explicitly solvable. 
However, if we suppose that all the forward trajectories $X_t$ originate from the same origin $x_0$ (i.e., $ρ_0=δ_{x_0}$), then $\overline{\bf x}_0$ \eqref{eq:exp_x0} is constant and independent of $\cev{X}_s$. We can solve the reverse dynamics explicitly in this case, which will be useful in designing a numerical scheme afterwards.

\begin{theorem}\label{thm:3}
  If $X_0 = x_0$ is fixed (i.e., if $\rho_0 = \delta_{x_0}$ for some $x_0 \in \mathbb{R}^d$), then
  \begin{equation}\label{eq:explicit_SDE_soln}
    \cev{X}_s = \frac{\sinh (T_*-s)}{\sinh(T_*)}\,\cev{X}_0 + \frac{\sinh (s)}{\sinh(T_*)}\,x_0 + \sqrt{2\sinh(T_*-s)\frac{\sinh(s)}{\sinh(T_*)}}\,\epsilon,
  \end{equation}
  where $\epsilon \sim \mathcal{N}({\bf 0},\text{Id})$ is a standard Gaussian random vector in dimension $d$. 
\end{theorem}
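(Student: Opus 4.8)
When $\rho_0 = \delta_{x_0}$, the conditional expectation $\overline{\bf x}_0(x,t) = \mathbb{E}[X_0\mid X_t = x]$ collapses to the constant $x_0$, because every forward trajectory starts at $x_0$. Plugging this into the alternative form of the reverse SDE from Proposition~\ref{prop:xhat_dynam}, the dynamics become \emph{linear} in $\cev{X}_s$:
\begin{equation}
  \dd\cev{X}_s = \cev{X}_s\,\dd s + 2\,\frac{\alpha_t\,x_0 - \cev{X}_s}{\beta_t}\,\dd s + \sqrt{2}\,\dd B_s, \qquad t = T_*-s,
\end{equation}
with $\alpha_t = \expo^{-t}$ and $\beta_t = 1-\expo^{-2t}$. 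The plan is to solve this linear non-autonomous SDE explicitly via an integrating factor, then simplify the resulting Gaussian into the hyperbolic-sine form advertised in \eqref{eq:explicit_SDE_soln}.

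First I would collect the drift coefficient of $\cev{X}_s$, namely $1 - \tfrac{2}{\beta_t}$, and compute the integrating factor $\mu_s = \exp\!\big(-\int_0^s (1 - \tfrac{2}{\beta_{T_*-r}})\,\dd r\big)$. Evaluating this integral is the one genuinely fiddly calculation: writing $\beta_{T_*-r} = 1 - \expo^{-2(T_*-r)}$, the antiderivative of $\tfrac{2}{\beta_{T_*-r}}$ involves $\log(1-\expo^{-2(T_*-r)})$ plus a linear term, and after combining with the $-1$ contribution the integrating factor should telescope into a clean ratio of $\sinh$ functions — I expect $\mu_s \propto \sinh(T_*-s)$ up to the right normalization, since $\sinh$ is exactly what makes the coefficients of $\cev{X}_0$ and $x_0$ in \eqref{eq:explicit_SDE_soln} sum to $1$ at the endpoints $s=0$ and $s=T_*$. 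This identification of the integrating factor with $\sinh(T_*-s)/\sinh(T_*)$ is the main obstacle; everything downstream is bookkeeping.

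Once $\mu_s$ is in hand, I would apply It\^o's formula to $\mu_s\,\cev{X}_s$ (the process is linear so there is no second-order It\^o correction beyond the Brownian term), obtaining
\begin{equation}
  \dd(\mu_s\,\cev{X}_s) = \mu_s\,\frac{2\alpha_t\,x_0}{\beta_t}\,\dd s + \mu_s\,\sqrt{2}\,\dd B_s,
\end{equation}
and integrate from $0$ to $s$. The deterministic integral $\int_0^s \mu_r\,\tfrac{2\alpha_{T_*-r}}{\beta_{T_*-r}}\,\dd r$ should again collapse — with $\mu_r \propto \sinh(T_*-r)$, the integrand is proportional to $\tfrac{\expo^{-(T_*-r)}}{1+\expo^{-(T_*-r)}}$ type expression whose antiderivative is a $\sinh$, yielding the coefficient $\sinh(s)/\sinh(T_*)$ on $x_0$ after dividing back by $\mu_s$. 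For the stochastic part, $\tfrac{\sqrt{2}}{\mu_s}\int_0^s \mu_r\,\dd B_r$ is a mean-zero Gaussian by It\^o isometry with variance $\tfrac{2}{\mu_s^2}\int_0^s \mu_r^2\,\dd r$; I would compute $\int_0^s \sinh^2(T_*-r)\,\dd r$ using the double-angle identity and check that, after dividing by $\sinh^2(T_*-s)$, it reduces to $2\sinh(T_*-s)\sinh(s)/\sinh(T_*)$ — here the identity $\sinh(a-b)\sinh(b) $ combined with $\sinh(2u) = 2\sinh u\cosh u$ does the work. Writing this Gaussian as $\sqrt{2\sinh(T_*-s)\sinh(s)/\sinh(T_*)}\;\epsilon$ with $\epsilon \sim \mathcal{N}(\mathbf{0},\text{Id})$ completes the identification with \eqref{eq:explicit_SDE_soln}. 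As a sanity check I would verify the endpoint behavior: at $s=0$ the formula gives $\cev{X}_0$ with zero noise, and as $s\to T_*$ it converges to $x_0$, consistent with Corollary~\ref{cor:support} (the reverse process lands exactly on the single data point).
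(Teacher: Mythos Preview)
Your approach is correct and matches the paper's: once $\overline{\bf x}_0\equiv x_0$ makes the SDE linear, both you and the paper solve it by an integrating factor and then read off the mean and the variance via the It\^o isometry. One bookkeeping correction: since $1 - 2/\beta_t = -\coth(t)$, your integrating factor is $\mu_s = \sinh(T_*)/\sinh(T_*-s)$, i.e.\ proportional to $1/\sinh(T_*-s)$ rather than $\sinh(T_*-s)$, so the variance integral you actually need is $\int_0^s \csch^2(T_*-r)\,\dd r = \coth(T_*-s)-\coth(T_*)$ rather than $\int_0^s \sinh^2(T_*-r)\,\dd r$; the paper sidesteps this by starting from the hyperbolic form \eqref{eq:reverse_SDE_alternative2}, from which the integrating factor $\expo^{u(s)}=\sinh(T_*)/\sinh(T_*-s)$ is immediate.
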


\begin{proof}
  The proof is based on the search for a suitable integrating factor. To begin with, we define $u(s) = -\ln\left (\frac{\sinh(T_*-s)}{ \sinh(T_*)}\right)$ and thus $\expo^{u(s)} = \frac{\csch (T_*-s )}{ \csch(T_* )}$.
  Multiplying both sides of the SDE \eqref{eq:reverse_SDE_alternative2} by $\expo^{u(s)}$ and rearranging the resulting equation, we obtain
  \begin{align*}
    &\expo^{u(s)}\,\dd\cev{X}_s + \expo^{u(s)}\,\left(\tanh\left (\frac{T_*-s}{2}\right ) + \csch (T_*-s)\right)\,\cev{X}_s\,\dd s \\
    &\qquad \qquad = \expo^{u(s)}\,\csch (T_*-s)\,\overline{\bf x}_0\left (\cev{X}_s, T_*-s\right )\,\dd s + \expo^{u(s)}\,\sqrt{2}\,\dd B_s.
  \end{align*}
  Since
  \begin{equation*}
    \frac{\dd}{\dd s}\left (\expo^{u(s)}\right ) = \expo^{u(s)}\,\left(\tanh\left (\frac{T_*-s}{2}\right ) + \csch (T_*-s)\right),
  \end{equation*}
  we have
  \begin{equation*}
    \dd\left(\expo^{u(s)}\cev{X}_s\right)=\expo^{u(s)}\,\csch(T_*-s)\,\overline{\bf x}_0\left(\cev{X}_s, T_*-s\right)\,\dd s+\expo^{u(s)}\,\sqrt{2}\,\dd B_s.
  \end{equation*}
  Thus
  \begin{equation}\label{eq:explicit_rev1}
    \expo^{u(s)}\,\cev{X}_s = e^{u(0)}\cev{X}_0 + \int_0^s \expo^{u(r)}\,\csch(T_*-r)\,\overline{\bf x}_0\left(\cev{X}_r, T_*-r\right)\,\dd r
    + \int_0^s \expo^{u(r)}\,\sqrt{2}\,\dd B_r.
  \end{equation}
  As $X_0=x_0$ is fixed by our assumption, $\overline{\bf x}_0\left(\cev{X}_r, T_*-r\right)=x_0$, and
  \begin{equation}\label{eq:explicit_rev2}
    \begin{aligned}
      \int_0^s \expo^{u(r)}\,\csch(T_*-r)\,\overline{\bf x}_0\left(\cev{X}_r, T_*-r\right)\, \dd r &= \int_0^s \frac{\csch^2(T_*-r)}{\csch(T_*)}\,x_0\,\dd r\\
                                                                                                   &= \frac{\coth(T_*-s) - \coth(T_*)}{\csch(T_*)}\,x_0.
    \end{aligned}
  \end{equation}
  Inserting \eqref{eq:explicit_rev2} into \eqref{eq:explicit_rev1} and rearranging, we find
  \begin{equation}\label{eq:explicit_rev3}
    \begin{aligned}
      &\cev{X}_s = \expo^{-u(s)}\,\expo^{u(0)}\,\cev{X}_0 + \expo^{-u(s)}\,\frac{\coth(T_*-s) - \coth(T_*)}{\csch(T_*)}\,x_0 + \expo^{-u(s)}\,\int_0^s \expo^{u(r)} \,\sqrt{2}\,\dd B_r \\
      &= \frac{\sinh (T_*-s )}{ \sinh(T_*)}\,\cev{X}_0 + \left(\frac{\sinh (T_*-s )}{\sinh(T_*)}\right)\left(\frac{\coth(T_*-s) - \coth(T_*)}{\csch(T_*)}\right)\,x_0 \\
      &\qquad +\frac{\sinh (T_*-s )}{ \sinh(T_*)}\,\int_0^s \expo^{u(r)}\,\sqrt{2}\,\dd B_r \\
      &= \frac{\sinh(T_*-s)}{\sinh(T_*)}\,\cev{X}_0 + \sinh(T_*-s)\,\left(\coth(T_*-s) -\coth(T_*)\right)\,x_0\\
      &\qquad +\frac{\sinh(T_*-s)}{ \sinh(T_*)}\,\int_0^s \expo^{u(r)}\,\sqrt{2}\,\dd B_r.
    \end{aligned}
  \end{equation}
  Notice that $\sinh(T_*-s)\,\left(\coth(T_*-s)-\coth(T_*)\right) = \frac{\sinh(s)}{\sinh(T_*)}$, from which we arrive at the following simplified expression for the variance of (each independent component of) the It\^o integral in \eqref{eq:explicit_rev3}:
  \begin{equation}\label{eq:explicit_rev5}
    \begin{aligned}
      \frac{\sinh^2(T_*-s )}{\sinh^2(T_* )}\,\int_0^s 2\,\expo^{2\,u(r)}\,\dd r &= 2\,\frac{\sinh^2 (T_*-s )}{ \sinh^2(T_* )}\,\int_0^s \frac{\csch^2 (T_*-s)}{\csch^2(T)}\,\dd r \\
                                                                                &= 2\,\left (\frac{\sinh^2 (T_*-s )}{ \sinh^2(T_* )}\right )\left (\frac{\coth (T_*-s) - \coth(T_*)}{\csch^2(T_*)}\right) \\
                                                                                &= 2\,\sinh (T_*-s )\,\frac{\sinh(s)}{\sinh(T_*)}.
    \end{aligned}
  \end{equation}
  As $\cev{X}_s$ is a $d$-dimensional Gaussian random vector with independent components (due to the non-randomness of $\overline{\bf x}_0$), we obtain \eqref{eq:explicit_SDE_soln}.
\end{proof}

As an immediate corollary from Theorem \ref{thm:3}, we deduce that the law $q(.,s)$ of $\cev{X}_s$ converges in Wasserstein distance toward the initial distribution $\rho_0=\delta_{x_0}$:
\begin{equation}\label{eq:W2_convergence}
  W_2\left(q(.,s),\rho_0\right) = W_2\left(q(.,s),\delta_{x_0}\right) \xrightarrow{s \to T} 0
\end{equation}
for any $q_0 = q(.,0) \in \mathcal{P}(\mathbb{R}^d)$ with a finite second moment, where $W_2(.,.)$ represents the usual Wasserstein distance of order $2$ between probability measures on $\mathbb{R}^d$. Indeed, it suffices to show \eqref{eq:W2_convergence} in one space dimension (i.e., when $d=1$) thanks to the independence of components of the $d$-dimensional Gaussian $\cev{X}_s$. By our explicit formula \eqref{eq:explicit_SDE_soln} for $\cev{X}_s$, we readily see that $\mathbb E[\cev{X}_s] \xrightarrow{s \to T} x_0$ and $\textrm{Var}[\cev{X}_s] \xrightarrow{s \to T} 0$, whence
\begin{equation}
  \label{eq:wasserstein_rho_0}
  W^2_2\big(q(.,s),\delta_{x_0}\big) \leq \mathbb{E}[|\cev{X}_s-x_0|^2] \,=\, \mathbb E[(\cev{X}_s)^2]-2\,\mathbb E[\cev{X}_s]\,x_0 + x^2_0 \;\xrightarrow{s \to T} 0.
\end{equation}

\subsection{Score-matching \& stable diffusion}

From the SDE \eqref{eq:reverse_SDE_alternative_form}, we would like to derive a numerical scheme to update $\cev{X}_s$ at discrete times. The main challenge is to estimate $\overline{\bf x}_0$ \eqref{eq:exp_x0_explicit} since $ρ_0$ is unknown (we only have access to the sample $\{{\bf x}_i\}_{i=1}^N$). We will discuss this task in the following subsection. Let's focus here on the numerical scheme assuming that we can compute $\overline{\bf x}_0$ in {\it some way}.

Let's denote a partition $0=s_0<s_1 <…<s_{n_*}=T_*$ with $Δs_{n}=s_{n+1}-s_n$. The goal is to find a recursive formula to express $\cev{X}_{s_{n+1}}$ as a function of $\cev{X}_{s_{n}}$.  A simple but somewhat naive method consists in applying the Euler-Maruyama method \cite{kloeden1992stochastic} to the reverse SDE \eqref{eq:reverse_SDE_alternative_form}:
\begin{equation}
  \label{eq:euler}
  \begin{array}{c}
    {\bf Euler}\\
    {\bf Maruyama}
  \end{array} {\bf :} \quad
  \cev{X}_{s_{n+1}}= \cev{X}_{s_n} + Δs_{n}⋅\left( \cev{X}_{s_n} + 2\,\frac{α_{t_n}\,\overline{\bf x}_{0}(\cev{X}_{s_n},t_n) - \cev{X}_{s_n}}{β_{t_n}}\right) + \sqrt{Δs_n}\, \epsilon_n
\end{equation}
with $t_n=T_*-s_n$, $α_{t_n}=\expo^{-t_n}$, $β_{t_n}=1 - \expo^{-2t_n}$  and $\epsilon_n \sim \mathcal{N}(\bf 0,\text{Id})$.

The Euler-Maruyama scheme assumes that the right hand side of the SDE \eqref{eq:reverse_SDE_alternative_form} is frozen over the time interval $[s_n,s_{n+1}]$. A more subtle method is to assume only $\overline{\bf x}_0$ is constant over the time interval $[s_n,s_{n+1}]$. Denoting this constant value by $⟨\overline{\bf x}_0⟩$, we can simplify the nonlinear SDE \eqref{eq:reverse_SDE_alternative_form} into a linear SDE as in \eqref{eq:reverse_SDE_alternative2}:
\begin{equation}
  \label{eq:linear_SDE}
  \dd\cev{X}_s =-\tanh\left(\frac{t}{2}\right)\,\cev{X}_s\,\dd s+\csch(t)\left(⟨\overline{\bf x}_0⟩-\cev{X}_s\right)\,\dd s + \sqrt{2}\,\dd B_s.
\end{equation}
Then we can apply the same strategy as in Theorem \ref{thm:3} to integrate exactly over the time interval $[s_n,s_{n+1}]$ via integrating factor:
\begin{equation}
  \label{eq:euler_better}
  \begin{array}{c}
    {\bf Stable}\\
    {\bf Diffusion}
  \end{array} {\bf :} \quad
  \ds
  \begin{array}{l}
    \ds \cev{X}_{s_{n+1}} = \frac{\sinh (t_{n+1})}{\sinh(t_n)}\,\cev{X}_{s_n} + \frac{\sinh (Δs_n)}{\sinh(t_{n})}\,⟨\overline{\bf x}_0⟩\\
    \\
    \ds \hspace{3cm}+\; \sqrt{2\sinh(t_{n+1})\frac{\sinh(Δs_n)}{\sinh(t_{n})}}\,\epsilon_n,
  \end{array}
\end{equation}
with $t_n=T_*-s_n$ (notice $t_{n+1}<t_n$). We emphasize that the formula \eqref{eq:euler_better} is exact only because we have assumed that $\overline{\bf x}_0$ is constant over the time interval $[s_n,s_{n+1}]$.

The recursive formula \eqref{eq:euler_better} appears quite convoluted, but it corresponds exactly to the formula used in the stable diffusion \cite{ho2020denoising}. Indeed, the derivation of the recursive formula for $\cev{X}_s$ is also exact when we assume $\overline{\bf x}_0$ is given.

\begin{proposition}
  The recursive formula \eqref{eq:euler_better} is equivalent to
  \begin{equation}
    \label{eq:stable_diffusion}
    \cev{X}_{s_{n+1}} = μ_{n+1} + σ_{n+1} \epsilon_n,
  \end{equation}
  with $\epsilon_n \sim \mathcal{N}({\bf 0},\text{Id})$ a standard Gaussian and
  \begin{eqnarray}
    \label{eq:mu_n}
    μ_{n+1} &=& \frac{\expo^{-Δs_n}(1-\expo^{-2t_{n+1}})}{1-\expo^{-2t_n}}\cev{X}_{n} + \frac{\expo^{-t_{n+1}}(1-\expo^{-2Δs_n})}{1-\expo^{-2t_n}}⟨\overline{\bf x}_0⟩, \\
    \label{eq:sig_n}
    σ_{n+1}^2 &=& \frac{(1-\expo^{-2t_{n+1}})(1-\expo^{-2Δs_n})}{1-\expo^{-2t_n}}.
  \end{eqnarray}
\end{proposition}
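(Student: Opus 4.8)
The plan is to start from the closed-form recursion \eqref{eq:euler_better} and rewrite every hyperbolic sine in terms of exponentials via the elementary identity $\sinh(z) = \tfrac12(\expo^{z}-\expo^{-z}) = \tfrac12\,\expo^{z}(1-\expo^{-2z})$. The only structural input is the bookkeeping relation between the two time variables: since $t_n = T_*-s_n$ and $\Delta s_n = s_{n+1}-s_n$, one has $t_{n+1} = t_n-\Delta s_n$, and every exponent that appears will be reduced using this. Because $\cev{X}_{s_{n+1}}$ in \eqref{eq:euler_better} is an affine function of the standard Gaussian $\epsilon_n$, it is itself Gaussian (conditionally on $\cev{X}_{s_n}$), and a Gaussian is determined by its mean and covariance; hence it suffices to match the coefficient of $\cev{X}_{s_n}$, the coefficient of $\langle\overline{\bf x}_0\rangle$, and the variance of the noise term with \eqref{eq:mu_n}--\eqref{eq:sig_n}.

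For the coefficient of $\cev{X}_{s_n}$, I would write $\frac{\sinh(t_{n+1})}{\sinh(t_n)} = \frac{\expo^{t_{n+1}}-\expo^{-t_{n+1}}}{\expo^{t_n}-\expo^{-t_n}}$ and multiply numerator and denominator by $\expo^{-t_n}$; using $t_{n+1}-t_n = -\Delta s_n$ and $-t_{n+1}-t_n = -2t_{n+1}-\Delta s_n$, the numerator becomes $\expo^{-\Delta s_n}(1-\expo^{-2t_{n+1}})$ and the denominator becomes $1-\expo^{-2t_n}$, which is exactly the first term of \eqref{eq:mu_n}. For the coefficient of $\langle\overline{\bf x}_0\rangle$, proceed identically with $\frac{\sinh(\Delta s_n)}{\sinh(t_n)}$, again multiplying through by $\expo^{-t_n}$ and now using $\Delta s_n-t_n = -t_{n+1}$ and $-\Delta s_n-t_n = -t_{n+1}-2\Delta s_n$; this yields $\frac{\expo^{-t_{n+1}}(1-\expo^{-2\Delta s_n})}{1-\expo^{-2t_n}}$, the second term of \eqref{eq:mu_n}.

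For the noise amplitude, I would expand $2\sinh(t_{n+1})\frac{\sinh(\Delta s_n)}{\sinh(t_n)}$ by writing $2\sinh(t_{n+1}) = \expo^{t_{n+1}}(1-\expo^{-2t_{n+1}})$ and combining with the expression for $\frac{\sinh(\Delta s_n)}{\sinh(t_n)}$ found above; the factors $\expo^{t_{n+1}}$ and $\expo^{-t_{n+1}}$ cancel, leaving $\sigma_{n+1}^2 = \frac{(1-\expo^{-2t_{n+1}})(1-\expo^{-2\Delta s_n})}{1-\expo^{-2t_n}}$, i.e. \eqref{eq:sig_n}. Having matched mean and variance, the two forms describe the same conditional law, which concludes the argument. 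There is no genuine obstacle here: the computation is entirely routine, and the only point requiring care is the choice of which exponential to factor out (pulling out $\expo^{-t_n}$ rather than $\expo^{t_n}$, and $\expo^{t_{n+1}}$ in the variance) so that the normalization matches the stated formulas, together with keeping the signs straight through $t_{n+1}=t_n-\Delta s_n$.
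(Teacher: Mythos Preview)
Your proposal is correct and follows essentially the same route as the paper's proof: both expand $\sinh$ in terms of exponentials (you factor out $\expo^{-t_n}$ after writing $\sinh z=\tfrac12\expo^{z}(1-\expo^{-2z})$, the paper divides numerator and denominator by $\expo^{t_n}$), use $t_{n+1}=t_n-\Delta s_n$ to rewrite the exponents, and separately match the mean and the variance. There is no substantive difference in strategy or in the level of detail required.
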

One can replace $Δs_n$ by $|Δt_n|$ since $|Δt_n|=|t_{n+1}-t_n|=|-s_{n+1}+s_n| = Δs_n$.
\begin{proof}
  Without loss of generality and for the sake of notational simplicity, we work with $d = 1$. Let us start from the formula \eqref{eq:euler_better}:
  \begin{eqnarray*}
    \E[\cev{X}_{s_{n+1}}\mid \cev{X}_{s_{n}}] &=& \frac{\sinh (t_{n+1})}{\sinh(t_n)}\,\cev{X}_{s_n} + \frac{\sinh (Δs_n)}{\sinh(t_{n})}\,⟨\overline{\bf x}_0⟩ \\
                                             &=& \frac{\expo^{t_n-Δs_n}-\expo^{-t_n+Δs_n}}{\expo^{t_n} -\expo^{-t_n}} \,\cev{X}_{s_n} + \frac{\expo^{Δs_n}-\expo^{-Δs_n}}{\expo^{t_n} -\expo^{-t_n}} ⟨\overline{\bf x}_0⟩ \\
    \nonumber &=& \frac{\expo^{-Δs_n}(1-\expo^{-2t_n+2Δs_n})}{1 -\expo^{-2t_n}} \,\cev{X}_{s_n} + \frac{\expo^{-t_n}\expo^{Δs_n}(1-\expo^{-2Δs_n})}{1-\expo^{-2t_n}} ⟨\overline{\bf x}_0⟩,\\
    \text{Var}[\cev{X}_{s_{n+1}}\mid \cev{X}_{s_{n}}] &=& 2\sinh(t_{n+1})\frac{\sinh(Δs_n)}{\sinh(t_{n})}\\
                                             &=& (\expo^{t_{n+1}} - \expo^{-t_{n+1}})\frac{\expo^{Δs_n} - \expo^{-Δs_n}}{\expo^{t_n} - \expo^{-t_n}} \\
                                             &=& \frac{\expo^{t_{n+1}}(1 - \expo^{-2t_{n+1}})\expo^{Δs_n}(1 - \expo^{-2Δs_n})}{\expo^{t_n}(1 - \expo^{-2t_n})}.
  \end{eqnarray*}
\end{proof}
We summarize our discussion in the figures \ref{fig:scheme_sable} and \ref{fig:notation_scheme}. The derivation of the stable diffusion algorithm \eqref{eq:stable_diffusion} using the linear SDE \eqref{eq:linear_SDE} is more complicated than the standard method using the PDE and Bayes' formula. However, one possible advantage is that higher-order numerical schemes might be employed \cite{kloeden1992stochastic} to solve the SDE more accurately.

\begin{figure}[p]
  \centering
  \includegraphics[width=.99\textwidth]{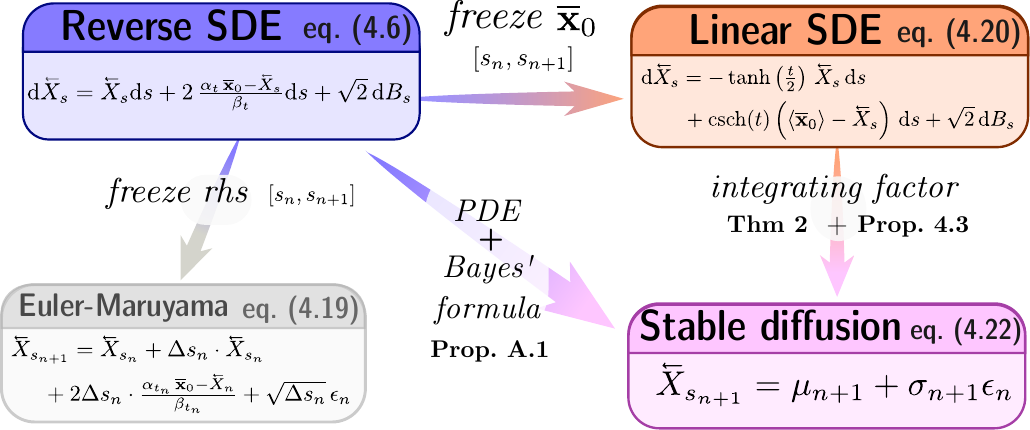}
  \caption{Derivation of the discrete algorithm used in stable diffusion from the reverse SDE \eqref{eq:reverse_SDE_alternative_form}. Assuming $\overline{\bf x}_0$ constant over a time interval $[s_n,s_{n+1}]$, the reverse SDE \eqref{eq:reverse_SDE_alternative_form} is integrable \eqref{eq:explicit_SDE_soln}, leading to the discrete formula \eqref{eq:stable_diffusion}. Another way to derive the numerical scheme is to apply the Bayes' formula to the forward PDE (see Appendix \ref{sec:appendix_stable_diffusion} for details).}
  \label{fig:scheme_sable}
\end{figure}

\begin{figure}[p]
  \centering
  \includegraphics[width=.8\textwidth]{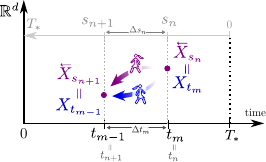}
  \caption{Two ways to derive the numerical scheme of the stable diffusion \eqref{eq:stable_diffusion}. First,  as a forward discretization of the reverse dynamics \eqref{eq:reverse_SDE_alternative_form} represented in magenta in the figure (see also figure \ref{fig:scheme_sable}). Second, as a {\it backward-in-time} discretization of the forward dynamics \eqref{eq:SDE} represented in blue in the figure (see also Appendix \ref{sec:appendix_stable_diffusion}).}
  \label{fig:notation_scheme}
\end{figure}




\subsection{Approximative flow and loss function}

Whether we want to solve the reverse dynamics \eqref{eq:reverse_SDE_alternative_form} (score-matching) or to invert the forward process \eqref{eq:stable_diffusion} (stable diffusion), the main difficulty relies on the estimation of the score $∇ \ln ρ(x,t)$ or equivalently $\overline{\bf x}_0$ \eqref{eq:exp_x0_explicit}. Since $ρ_0$ is unknown, the explicit formula for $\overline{\bf x}_0$ \eqref{eq:exp_x0_explicit} cannot be used. Instead, the usual approach \cite{song2019generative} is to train a neural network (or any type of parametrized function), denoted by ${\bf s}_θ$, to approximate the score $∇ \ln ρ$:
\begin{equation}
  \label{eq:score_matching}
  {\bf s}_θ(x,t) \approx ∇ \ln ρ(x,t) \qquad \text{for } 0\leq t \leq T_*,\,\, x∈ℝ^d.
\end{equation}
The parameters $θ$ are then obtained by minimizing the mean-square error over the trajectories:
\begin{equation}
\ell[θ] = \mathop{\mathbb{E}}_{\substack{t\sim U([0,T_*])\\X_t\sim ρ_t}} \Big[\big|{\bf s}_θ(X_t,t) - ∇ \ln ρ(X_t,t)\big|^2\Big],
  \label{eq:loss}
\end{equation}
The explicit expression for $∇ \ln ρ$ \eqref{eq:exp_grad_ln_rho} motivates the following ``Ansatz'' for ${\bf s}_θ$:
\begin{equation}
  {\bf s}_θ(x,t) = \frac{\expo^{-t}\overline{\bf x}_θ(x, t)-x}{1-\expo^{-2t}}
  \label{eq:ansatz_s}
\end{equation}
where now the neural network $\overline{\bf x}_θ$ aims to approximate $\overline{\bf x}_0$. Then the loss \eqref{eq:loss} becomes 
\begin{equation}
  \ell[θ] = \mathop{\mathbb{E}}_{\substack{t\sim U([0,T_*])\\X_t\sim ρ_t}} \Big[c^2(t)\big(\overline{\bf x}_θ(X_t,t) - \overline{\bf x}_0(X_t,t)\big)^2\Big] \quad \text{ with }c(t)=\frac{\expo^{-t}}{1-\expo^{-2t}}.
  \label{eq:loss2}
\end{equation}
Given that $\overline{\bf x}_0$ is already an expectation \eqref{eq:exp_x0}, we can use the bias-variance decomposition to rewrite the loss as:
\begin{equation}
  \ell[θ] = \mathop{\mathbb{E}}_{\substack{t\sim U([0,T_*])\\X_0\sim ρ_0}} \Big[c^2(t)\big(\overline{\bf x}_θ(X_t,t)- X_0\big)^2\Big] \;-\; \mathop{\text{Var}}_{\substack{t\sim U([0,T_*])\\X_0\sim ρ_0}}[c(t) \overline{\bf x}_0(X_t,t)],
  \label{eq:loss3}
\end{equation}
where $X_t$ is given by the explicit formula \eqref{eq:sol_OU}. Since the variance of $\overline{\bf x}_0$ is independent of $θ$, we can ignore it in the minimization and consider only the lefthand side (the {\it total loss}):
\begin{equation}
  \ell_{total}[θ] = \mathop{\mathbb{E}}_{\substack{t\sim U([0,T_*])\\X_0\sim ρ_0}} \Big[c^2(t)\big(\overline{\bf x}_θ(X_t,t)- X_0\big)^2\Big].
  \label{eq:loss_total}
\end{equation}
Notice that the loss $\ell_{total}$ is lower bounded by the variance term in \eqref{eq:loss3}, hence $\ell_{total}$ cannot converge to zero regardless of the neural network used for $\overline{\bf x}_θ$.

Another modification proposed in \cite{ho2020denoising} is to use the explicit solution of the forward process \eqref{eq:sol_OU} to write $X_0$ as a function of $X_t$:
\begin{equation}
  X_t = \mathrm{e}^{-t}X_0 + \sqrt{1-\mathrm{e}^{-2t}} \,\boldsymbol{ε}_0 \;\;\; ⇒\;\;\; X_0 =  \mathrm{e}^{t}X_t - \mathrm{e}^{t}\sqrt{1-\mathrm{e}^{-2t}} \,\boldsymbol{ε}_0,
  \label{eq:X_t_explicit}
\end{equation}
where $\boldsymbol{ε}_0 \sim \mathcal{N}(0,\text{Id})$. Thus, we can use another Ansatz to predict $X_0$:
\begin{equation}
\overline{\bf x}_θ(X_t,t) = \mathrm{e}^{t}X_t - \mathrm{e}^{t}\sqrt{1-\mathrm{e}^{-2t}} \,\overline{\boldsymbol{ε}}_θ(X_t,t)
  \label{eq:x_0_ansatz}
\end{equation}
where now the neural network $\overline{\boldsymbol{ε}}_θ$ aims to predict the added noise $\boldsymbol{ε}_0$ to the original sample. Plugging in the previous formula into the total loss $\ell_{total}$ \ref{eq:loss_total} gives:
\begin{equation}
  \ell_{total}[θ] = \mathop{\mathbb{E}}_{\substack{t\sim U([0,T_*])\\X_0\sim ρ_0\\\boldsymbol{ε}_0\sim \mathcal{N}(0,\text{Id})}} \Big[\frac{1}{1-\mathrm{e}^{-2t}}\big(\overline{\boldsymbol{ε}}_θ(X_t,t)- \boldsymbol{ε}_0\big)^2\Big],
  \label{eq:loss_total_ep}
\end{equation}
where $X_t$ is given by \eqref{eq:X_t_explicit}. Notice that the weight $1/(1-\mathrm{e}^{-2t})$ is singular as $t→0$. To avoid the singularity, the integral is replaced by a finite (Riemann) sum with a non-uniform discretization in time $0=t_0<...<t_{m_*}=T_*$:
\begin{equation}
  \label{eq:loss_score_matching}
  \widetilde{\ell}_{total}[θ] = \mathop{\mathbb{E}}_{\substack{X_0\sim ρ_0\\ k∈[1,m_*]}} \Big[\big(\overline{\boldsymbol{ε}}_θ(X_{t_k},t_k)- \boldsymbol{ε}_0\big)^2\Big],
\end{equation}
where the discretization $Δt_k=t_{k+1}-t_k$ is smaller as $t$ gets close to zero, hence more weight is given to the loss for $t$ close to zero.

After minimization, the score ${\bf s}_θ(x,t)$ is computed in the following three steps:
\begin{itemize}
  \item[i)] Predict the noise level: $\widetilde{ε}=\overline{\boldsymbol{ε}}_θ(x,t)$,
  \item[ii)] Deduce the expectated origin: $\widetilde{x}_0 =  \mathrm{e}^{t}x - \mathrm{e}^{t}\sqrt{1-\mathrm{e}^{-2t}} \,\widetilde{ε}$,
  \item[iii)] Reconstruct the score: $\displaystyle {\bf s}_θ(x,t) = \frac{\expo^{-t} \widetilde{x}_0 -x}{1-\expo^{-2t}}$.
\end{itemize}
Notice that there is no need to reconstruct the score ${\bf s}_θ$ if one resorts to the stable diffusion formulation \eqref{eq:stable_diffusion}.

\subsection{Overfitting regime and kernel formulation}

Whichever loss function we use (i.e. $\ell$ \eqref{eq:loss}, $\ell_{total}$ \eqref{eq:loss_total}, or $\widetilde{\ell}_{total}$ \eqref{eq:loss_score_matching}), the minimizer is explicitly given by $\overline{\bf x}_0$ \eqref{eq:exp_x0} as the expected value is the unique minimizer of the mean square error. The parametric solution ${\bf s}_θ$ can only approach the non-parametric solution $\overline{\bf x}_0$ and,  in some sense, $\overline{\bf x}_0$ is the optimal limit of ${\bf s}_θ$ as the neural network gets infinitely large.

However, the explicit formulation for $\overline{\bf x}_0$ \eqref{eq:exp_x0_explicit} cannot be used as the density $ρ_0$ is unknown. The same problem occurs when estimating the different loss functions previously introduced. In the absence of $\rho_0$, the sampling $X_0\sim ρ_0$ in the loss \eqref{eq:loss_score_matching} is replaced in practice by a uniform sampling over the known samples $\{{\bf x}_i\}_{i=1}^N$. In other words, the empirical risk is used. However, in this specific case, one can compute $\overline{\bf x}_0$ explicitly, leading to a kernel formulation (note that similar formulations were found in \cite{gu2023memorization} and \cite{yi2023generalization}). 

%

\begin{proposition}
  \label{ppo:kernel_formulation}
  Suppose the initial distribution $ρ_0$ is uniform over the samples $\{{\bf x}_i\}_{i=1}^N$, i.e.,
  \begin{equation}
    \label{eq:rho_0_exp}
    ρ_0(x) = \frac{1}{N}∑_{i=1}^N δ_{{\bf x}_i}(x),
  \end{equation}
  where $δ_x$ is the Dirac mass centered at $x$. Then the expected value $\overline{\bf x}_0$ \eqref{eq:exp_x0} is explicit and given by the following kernel formulation:
  \begin{equation}
    \label{eq:x0_empirical}
    \overline{\bf x}_0(x,t)=\sum_{i=1}^N λ_i(x,t)\,{\bf x}_i,
  \end{equation}
  where $\{λ_i(x,t)\}_{i=1}^N$ is a convex combination given by
  \begin{equation}
    \label{eq:alpha_i}
    λ_i(x,t) = \frac{\rho\left(x, t \mid {\bf x}_i, 0\right)}{\sum_{j=1}^N ρ \left(x, t \mid {\bf x}_j, 0\right)}
  \end{equation}
  with $ρ \left(., t\mid y,0\right)\sim \mathcal{N}\left(\expo^{-t}y,β_t^2\text{Id}\right)$ and $β_t^2=1-\expo^{-2t}$.
\end{proposition}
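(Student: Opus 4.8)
The plan is to simply specialize the integral formula for $\overline{\bf x}_0$ from Lemma \ref{lem:x_hat} to the empirical measure \eqref{eq:rho_0_exp}. Recall that Lemma \ref{lem:x_hat} gives
\begin{equation*}
  \overline{\bf x}_0(x,t) = \frac{\int_{y\in\mathbb{R}^d} y\,\rho(x,t\mid y,0)\,\rho_0(y)\,\dd y}{\rho(x,t)},
  \qquad \rho(x,t) = \int_{y\in\mathbb{R}^d}\rho(x,t\mid y,0)\,\rho_0(y)\,\dd y,
\end{equation*}
the second identity being just the law of total probability (marginalizing the joint law \eqref{eq:joint_law_at_0}). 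First I would substitute $\rho_0 = \frac{1}{N}\sum_{i=1}^N \delta_{{\bf x}_i}$ into both the numerator and the denominator. Since integration against a Dirac mass is evaluation, the numerator becomes $\frac{1}{N}\sum_{i=1}^N {\bf x}_i\,\rho(x,t\mid {\bf x}_i,0)$ and the denominator becomes $\frac{1}{N}\sum_{j=1}^N \rho(x,t\mid {\bf x}_j,0)$. The factors $1/N$ cancel, yielding
\begin{equation*}
  \overline{\bf x}_0(x,t) = \sum_{i=1}^N \frac{\rho(x,t\mid {\bf x}_i,0)}{\sum_{j=1}^N \rho(x,t\mid {\bf x}_j,0)}\,{\bf x}_i = \sum_{i=1}^N \lambda_i(x,t)\,{\bf x}_i,
\end{equation*}
which is exactly \eqref{eq:x0_empirical}–\eqref{eq:alpha_i}.

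Next I would check that $\{\lambda_i(x,t)\}_{i=1}^N$ is indeed a convex combination. Each transition density $\rho(x,t\mid {\bf x}_j,0)$ is the value of the Gaussian density $\mathcal{N}(\expo^{-t}{\bf x}_j,\beta_t^2\text{Id})$ at $x$, hence strictly positive for every $t>0$; therefore the denominator is positive, each $\lambda_i(x,t)\geq 0$, and $\sum_{i=1}^N \lambda_i(x,t) = 1$ by construction. I would also record the explicit Gaussian form $\rho(x,t\mid y,0) = (2\pi\beta_t^2)^{-d/2}\exp(-|x-\expo^{-t}y|^2/(2\beta_t^2))$ with $\beta_t^2 = 1-\expo^{-2t}$, coming from \eqref{eq:sol_OU}, so that the normalizing constants cancel in the ratio and $\lambda_i$ depends only on the exponents — this makes contact with the earlier explicit formula \eqref{eq:explicit_q_y_T_x_0}.

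There is no real obstacle here: the statement is essentially a bookkeeping corollary of Lemma \ref{lem:x_hat}, and the only point requiring a word of care is that $\rho_0$ is a singular measure, so one should note that the conditional expectation $\overline{\bf x}_0(x,t) = \E[X_0\mid X_t=x]$ is still well defined because the joint law $\rho(x,0;y,t)$ has a density in the $x$-variable (the forward transition kernel smooths the Dirac masses for any $t>0$), which justifies applying Bayes' rule as in the proof of Lemma \ref{lem:x_hat}. If one prefers to avoid conditioning on a null event, the same formula can instead be obtained by passing to the limit in \eqref{eq:exp_x0_explicit} along a sequence of mollified initial data converging weakly to $\rho_0$.
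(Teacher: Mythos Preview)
Your proposal is correct and follows essentially the same route as the paper: substitute the empirical $\rho_0$ of \eqref{eq:rho_0_exp} into the integral formula \eqref{eq:exp_x0_explicit} of Lemma \ref{lem:x_hat}, together with $\rho(x,t)=\int \rho(x,t\mid y,0)\rho_0(y)\,\dd y$, and read off the weights. Your additional remarks on positivity, the convex-combination property, and the well-posedness of the conditional expectation for singular $\rho_0$ are fine elaborations but not needed for the paper's terse argument.
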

\begin{proof}
  Plugging in the expression of $ρ_0$ \eqref{eq:rho_0_exp} into \eqref{eq:exp_x0_explicit} and then using the fact that
  \begin{equation}
    ρ(x,t) = \int_{y∈ℝ^d} ρ(x, t \mid y, 0)\,\rho_0(y)\, \dd y
  \end{equation}
  finishes the proof.
\end{proof}

The weight $λ_i(x,t)$ \eqref{eq:alpha_i} encodes the probability that $X_0 = {\bf x}_i$ given that $X_t = x$. An alternative handy expression of the vector $\boldsymbol{λ} = (λ_1,\ldots,λ_N$) is given by
\begin{equation}\label{eq:softmax_representation}
  \boldsymbol{λ}(x,t) = \textrm{Softmax}\left(-\frac{|x-\expo^{-t}\,{\bf x}_1|^2}{2β_t^2},\ldots,-\frac{|x-\expo^{-t}\,{\bf x}_N|^2}{2β_t^2}\right),
\end{equation}
with $\textrm{Softmax}$ representing the usual softmax function. Notice that as $t \gg 1$, the weights $λ_i$ become uniform, i.e.,
\begin{equation}
  \label{eq:weight_t_large}
  \boldsymbol{λ}(x,t) \stackrel{t→+∞}{\longrightarrow} \left(\frac{1}{N},…,\frac{1}{N}\right),
\end{equation}
leading $\overline{\bf x}_0$ \eqref{eq:x0_empirical} to become the mean of the samples $\{{\bf x}_i\}_{i=1}^N$. On the other hand, as $t→0$, the vector $\boldsymbol{λ}$ becomes (almost surely) a one-hot vector:
\begin{equation}
  \label{eq:weight_t_small}
  \boldsymbol{λ}(x,t) \stackrel{t→0}{\longrightarrow} (0,…,0,1,0,…,0)
\end{equation}
where the value $1$ indicates the closest sample ${\bf x}_i$ to $x$. We can further formalize this observation by introducting the Voronoi tessellation $\{\mathcal{C}_i\}_{i=1}^N$ associated with the samples $\{{\bf x}_i\}_{i=1}^N$:
\begin{equation}
  \label{eq:voronoi}
  \mathcal{C}_i = \{x∈ℝ^d \mid |x-{\bf x}_i| = \min (|x-{\bf x}_1|,…,|x-{\bf x}_N|)\}.
\end{equation}
Then as $t→0$, the expected value $\overline{\bf x}_0$ becomes constant on the (interior of the) Voronoi cells $C_i$:
\begin{equation}
  \label{eq:x0_voronoi}
  \overline{\bf x}_0(x,t) \stackrel{t→0}{\longrightarrow} {\bf x}_i \quad \text{for } x∈\mathring{C}_i.
\end{equation}
Moreover, the reverse diffusion becomes a stiff relaxation as $t→0$ \eqref{eq:reverse_SDE_t0}. Thus, for $\cev{X}_s∈\mathring{\mathcal{C}}_i$,
\begin{equation}
  \label{eq:reverse_SDE_t0_bis}
  \dd\cev{X}_s \stackrel{t \sim 0}{\approx} \frac{1}{t}\left({\bf x}_i - \cev{X}_s \right)\,\dd s +  \mathcal{O}(1)\,\dd s + \sqrt{2}\,\dd B_s.
\end{equation}
As $t→0$, the process $\cev{X}_s$ becomes more attracted to ${\bf x}_i$ and is more likely to stay inside the Voronoi cell $\mathring{\mathcal{C}}_i$.

The previous observations are rather informal. The Corollary \ref{cor:support} provides a more clear statement: as the support of $ρ_0$ consists of the discrete sample points, the reverse process will eventually converge to one of the original sample (see figure \ref{fig:tesselation} for an illustration):
\begin{equation}
  \cev{X}_s \stackrel{s→T_*}{\longrightarrow} {\bf x}_i.
  \label{eq:cv_sample}
\end{equation}
Thus, solving the minimization problem {\it exactly} is counter-productive: the generative flow does nothing but sample from the original samples. Hence, there is an inherent risk of overfitting regime when applying the reverse diffusion dynamics since there is no inherent regularization unless penalization terms are added.

\begin{figure}
  \begin{center}
    \includegraphics[width=0.95\textwidth]{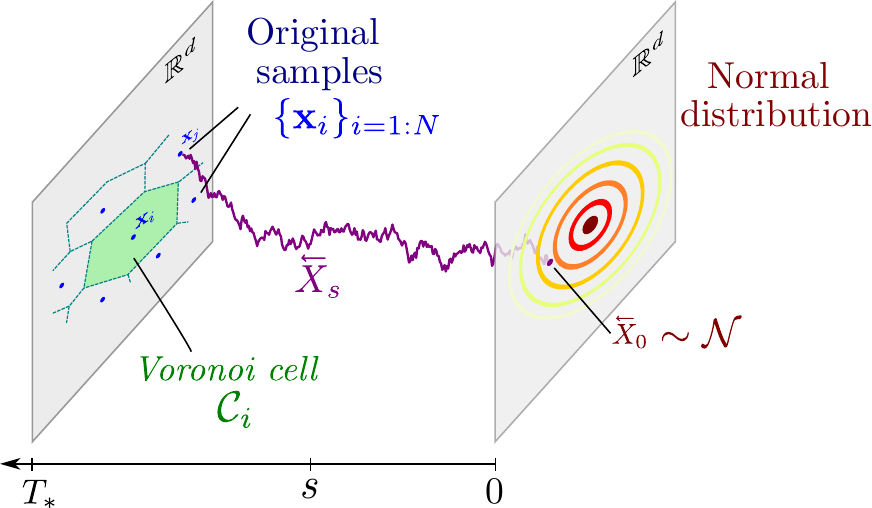}
  \end{center}
  \caption{Starting from a normal distribution (right), the reverse process $\cev{X}_s$ becomes trapped into a Voronoi cell $\mathcal{C}_i$ \eqref{eq:voronoi} as $t→0$ and eventually converges to a sample ${\bf x}_i$ as stated by Corollary \ref{cor:support}.}\label{fig:tesselation}
\end{figure}

\section{Conclusion}\label{sec:conclusion}

In this manuscript, we have shown that reverse diffusion, if executed exactly, does not regularize the original sample distribution $ρ_0$. Indeed, by perfectly reconstructing the forward process, the resulting generated distribution $q_{T_*}$ \eqref{eq:reverse_diffusion} has its support entirely contained within the support of $ρ_0$ (corollary \ref{cor:support}). In other words, generated samples cannot be ``out-of-the-sample." Consequently, for a finite number of samples, exact reverse diffusion will merely reproduce one of the original samples.

In practice, score-matching or stable diffusion \eqref{eq:stable_diffusion}-\eqref{eq:sig_n} does not precisely follow the reverse diffusion process because it introduces a minimization problem \eqref{eq:loss} that is not solved exactly. However, it is relatively straightforward to derive an explicit formula (e.g., using the kernel formulation in Proposition \eqref{eq:rho_0_exp}) for the minimization problem. That said, this computation becomes impractical when the number of original samples $N$ is large. This leads to an apparent paradox: when the minimization problem is solved too accurately, the generative method closely approximates the reverse diffusion process, and thus no new samples are generated (see, e.g.,  \cite{pidstrigach2022score} for a condition under which no generalization occurs). Paradoxically, the minimization problem must be ``imperfect" for the generative method to produce novel samples.

A simple extension of our work would involve demonstrating that (discrete) stable diffusion, in the overfitting regime, indeed converges to the original samples. While we have established this result for the continuous reverse diffusion formulation as a consequence of Corollary \ref{cor:support}, it remains to be rigorously proven for the discrete algorithm.

More broadly, our results raise the question of when generalization occurs when applying reverse diffusion. Reverse diffusion by itself does not ``generalize." A simpler way to express this is to say that {\it ``everything is in the UNet"}. Indeed, for image generation, a UNet architecture is used to estimate the expected origin $\overline{\bf x}_0$ \eqref{eq:exp_x0} (or, equivalently, the expected noise $\boldsymbol{ε}_{0}$). Without it, the overall machinery of the reverse diffusion process would not work. Thus, one has to study the interplay between the reverse dynamics diffusion and the UNet approximation. The flow introduces a mixing property that combines different images intelligently \cite{kadkhodaie2023generalization} without assuming that relevant images lie on a low-dimensional manifold (as in the variational auto-encoder approach). Interestingly, the neural approximation of the score is ``imperfect" enough to generate new data points. Using the kernel formulation in the overfitting regime, one can identify when the practical flow starts to deviate from the exact flow, thereby characterizing the ``good error" necessary for generating new samples.

\clearpage

\newpage
\appendix
\markboth{Appendices}{Appendices}

\section{Formula for stable diffusion}
\label{sec:appendix_stable_diffusion}
\setcounter{equation}{0}

Denote by $ρ$ the law of the forward process \eqref{eq:SDE}. Fix a partition $0=t_0<t_1<...<t_m<...<T_*$ with $Δt_m = t_{m}-t_{m-1}$, and set $X_m = X_{t_m}$. We define $ρ(X_{m}\mid X_{m-1})$ to be the law of $ρ$ at time $t_{m}$ knowing $X_{m-1}$. Using our pairwise distribution \eqref{eq:joint_law}, this is equivalent to
\begin{equation}
  \label{eq:pairwise_stable}
  ρ(X_{m}=y\mid X_{m-1}=x) = ρ(y,t_{m} \mid x,t_{m-1}).
\end{equation}
Our goal is to reverse the equation \eqref{eq:pairwise_stable} and express the probability of the past position $X_{m-1}$ knowing the future $X_{m}$. The trick is once again to assume that we know the starting  position $x_0$.
\begin{proposition}
  \begin{equation}
    \label{eq:stable_formula}
    ρ(X_{m-1} \mid X_m,x_0) \sim \mathcal{N}(\overline{μ}_{m-1},\overline{σ}_{m-1}^2\text{Id})
  \end{equation}
  with
  \begin{eqnarray}
    \label{eq:mu_m_appendix}
    \overline{μ}_{m-1} &=& \frac{\expo^{-Δt_m}(1-\expo^{-2t_{m-1}})}{1-\expo^{-2t_m}}X_m + \frac{\expo^{-t_{m-1}}(1-\expo^{-2Δt_m})}{1-\expo^{-2t_m}}x_0, \\
    \label{eq:sigma2_m_appendix}
    \overline{σ}_{m-1}^2 &=& \frac{(1-\expo^{-2Δt_m})(1-\expo^{-2t_{m-1}})}{1-\expo^{-2t_m}}.
  \end{eqnarray}
\end{proposition}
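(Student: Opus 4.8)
The plan is to compute the conditional law $\rho(X_{m-1}\mid X_m, x_0)$ directly via Bayes' formula, using the fact that all the relevant distributions are Gaussian because the starting point $x_0$ is fixed. First I would write
\[
  \rho(X_{m-1}=x \mid X_m=y, x_0) = \frac{\rho(X_m=y\mid X_{m-1}=x)\,\rho(X_{m-1}=x\mid x_0)}{\rho(X_m=y\mid x_0)}.
\]
By the time-homogeneity and Markov property of the Ornstein-Uhlenbeck process together with the explicit solution \eqref{eq:sol_OU}, each factor on the right is an explicit Gaussian density: $\rho(X_{m-1}\mid x_0)\sim\mathcal{N}(\expo^{-t_{m-1}}x_0,\ (1-\expo^{-2t_{m-1}})\text{Id})$, and $\rho(X_m=y\mid X_{m-1}=x)\sim\mathcal{N}(\expo^{-\Delta t_m}x,\ (1-\expo^{-2\Delta t_m})\text{Id})$ — the latter because the OU process run for a duration $\Delta t_m = t_m - t_{m-1}$ starting from $x$ has exactly this law. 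The denominator $\rho(X_m\mid x_0)\sim\mathcal{N}(\expo^{-t_m}x_0,\ (1-\expo^{-2t_m})\text{Id})$ does not depend on $x$, so it only contributes to the normalization.

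Next I would collect, as a function of $x$, the exponent of the product of the two Gaussians in the numerator. It is a quadratic form in $x$, so $\rho(X_{m-1}\mid X_m, x_0)$ is Gaussian; its variance $\overline\sigma_{m-1}^2$ is the reciprocal of the coefficient of $|x|^2$ (divided by $2$), and its mean $\overline\mu_{m-1}$ is obtained by completing the square. Concretely, writing $\beta_\tau^2 = 1-\expo^{-2\tau}$, the coefficient of $|x|^2/2$ is $\expo^{-2\Delta t_m}/\beta_{\Delta t_m}^2 + 1/\beta_{t_{m-1}}^2$, and one checks by direct algebra — using $\beta_{t_m}^2 = \expo^{-2\Delta t_m}\beta_{t_{m-1}}^2 + \beta_{\Delta t_m}^2$, which is just the semigroup identity $\sigma^2_{t_m} = \expo^{-2\Delta t_m}\sigma^2_{t_{m-1}} + \sigma^2_{\Delta t_m}$ written out — that this reciprocal equals $\beta_{\Delta t_m}^2\beta_{t_{m-1}}^2/\beta_{t_m}^2$, which is \eqref{eq:sigma2_m_appendix}. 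The linear-in-$x$ term pairs $x$ with $\expo^{-\Delta t_m}y/\beta_{\Delta t_m}^2 + \expo^{-t_{m-1}}x_0/\beta_{t_{m-1}}^2$; multiplying by $\overline\sigma_{m-1}^2$ and simplifying with the same semigroup identity produces the two-term convex-type combination \eqref{eq:mu_m_appendix}.

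Alternatively — and this is probably the cleaner route to present — I would \emph{recognize} that this proposition is exactly the statement already proved, since by Theorem \ref{thm:past_future} (the ``past of $\rho$ is the future of $q$'' identity \eqref{eq:past_future}) the reverse transition $\rho(X_{m-1}\mid X_m, x_0)$ coincides with $q(\cdot, s_{m-1}\mid\cdot, s_m)$, and under the hypothesis $\rho_0 = \delta_{x_0}$ the explicit solution of the reverse SDE in Theorem \ref{thm:3} gives precisely a Gaussian; taking its conditional law over one step $[s_m, s_{m-1}]$ reproduces \eqref{eq:mu_m_appendix}--\eqref{eq:sigma2_m_appendix}. Indeed these are formulas \eqref{eq:mu_n}--\eqref{eq:sig_n} with $\langle\overline{\bf x}_0\rangle = x_0$, $\Delta s_n \leftrightarrow \Delta t_m$, $t_n\leftrightarrow t_{m-1}$, $t_{n+1}\leftrightarrow t_m$ (note the index/time reversal). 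I expect the main obstacle to be purely bookkeeping: keeping the two time conventions ($t$ increasing for the forward process, $s = T_* - t$ for the reverse) and the partition indices aligned, and invoking the semigroup identity for $\sigma^2_t$ at the right moments so the messy exponentials collapse into the stated form. No genuine analytic difficulty arises — everything is finite-dimensional Gaussian conditioning.
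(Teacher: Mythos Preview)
Your primary approach --- Bayes' formula plus completing the square in the Gaussian exponent --- is exactly the paper's proof; the only cosmetic difference is that the paper first rewrites the likelihood $\rho(X_m\mid X_{m-1})$ as a Gaussian \emph{in $X_{m-1}$} (mean $\expo^{\Delta t_m}X_m$, variance $(\expo^{2\Delta t_m}-1)\text{Id}$) and then invokes a separate product-of-Gaussians lemma (Lemma~\ref{lem:product_gauss}), whereas you extract the quadratic-in-$x$ coefficients directly and appeal to the semigroup identity $\beta_{t_m}^2=\expo^{-2\Delta t_m}\beta_{t_{m-1}}^2+\beta_{\Delta t_m}^2$. Same computation, different packaging.

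Your alternative route via Theorems~\ref{thm:past_future} and~\ref{thm:3} is correct in principle but would be circular here: the whole point of this appendix (see Figures~\ref{fig:scheme_sable}--\ref{fig:notation_scheme}) is to give an \emph{independent} ``backward-in-time'' derivation of \eqref{eq:mu_n}--\eqref{eq:sig_n} that does not pass through the reverse SDE. Also, your index dictionary is swapped: comparing \eqref{eq:mu_n}--\eqref{eq:sig_n} with \eqref{eq:mu_m_appendix}--\eqref{eq:sigma2_m_appendix} term by term gives $t_n\leftrightarrow t_m$ and $t_{n+1}\leftrightarrow t_{m-1}$ (with $\cev{X}_{s_n}\leftrightarrow X_m$), not the other way around --- exactly the bookkeeping hazard you anticipated.
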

\begin{proof}
  We start by invoking the Bayes' formula:
  \begin{eqnarray}
    ρ(X_{m-1} \mid X_m,x_0) &=&\frac{ρ(X_m \mid X_{m-1},x_0)⋅ρ(X_{m-1}\mid x_0)}{ρ(X_m\mid x_0)} \\
                            &=& c\,⋅\, ρ(X_m \mid X_{m-1})⋅ρ(X_{m-1}\mid x_0),
  \end{eqnarray}
  using the Markov property and the fact that the numerator is constant in $X_{m-1}$. Notice that the notation is somewhat confusing here since $ρ(X_m \mid X_{m-1})$ has to be seen as a function of $X_{m-1}$ and not $X_m$. Hence,
  \begin{eqnarray*}
    ρ(X_m \mid X_{m-1}) = c \exp\left(-\frac{|X_m-\expo^{-Δt_m} X_{m-1}|^2}{2(1-\expo^{-2Δt_m})}\right) = c \exp\left(-\frac{|\expo^{Δt_m}X_m- X_{m-1}|^2}{2(\expo^{2Δt_m}-1)}\right).
  \end{eqnarray*}
  To conclude, we only need to apply Lemma \ref{lem:product_gauss} with
  \begin{eqnarray}
    ρ(X_m \mid X_{m-1}) &\sim& \mathcal{N}(\expo^{Δt_m}X_m\;,\;(\expo^{2Δt_m}-1)\text{Id}) \\
    ρ(X_{m-1}\mid x_0) &\sim& \mathcal{N}(\expo^{-t_{m-1}}x_0\;,\;(1-\expo^{-2t_{m-1}})\text{Id}),
  \end{eqnarray}
  leading to  $ρ(X_{m-1} \mid X_m,x_0) \sim \mathcal{N}(\overline{μ},\overline{σ}^2\text{Id})$ with
  \begin{eqnarray*}
    \overline{μ} &=& \frac{(1-\expo^{-2t_{m-1}})\expo^{Δt_m}X_m}{(\expo^{2Δt_m}-1) + (1-\expo^{-2t_{m-1}})} \;+\;  \frac{(\expo^{2Δt_m}-1)\expo^{-t_{m-1}}x_0}{(\expo^{2Δt_m}-1) + (1-\expo^{-2t_{m-1}})}.
  \end{eqnarray*}
  We recognize the formula for $\overline{μ}_{m-1}$ \eqref{eq:mu_m_appendix}. Similarly, we have
  \begin{eqnarray*}
    \overline{σ}^2 &=& \frac{(\expo^{2Δt_m}-1)⋅(1-\expo^{-2t_{m-1}})}{(\expo^{2Δt_m}-1) + (1-\expo^{-2t_{m-1}})} = \frac{(\expo^{2Δt_m}-1)⋅(1-\expo^{-2t_{m-1}})}{\expo^{2Δt_m}(1-\expo^{-2t_m})}.
  \end{eqnarray*}
\end{proof}

\begin{lemma}
  \label{lem:product_gauss}
  Suppose $f\sim \mathcal{N}(μ_1,σ_1^2\text{Id})$ and $g\sim \mathcal{N}(μ_2,σ_2^2\text{Id})$. Then
  \begin{equation}
    h(z) = cf(z)g(z) \sim \mathcal{N}(\overline{μ},\overline{σ}^2\text{Id}),
  \end{equation}
  where $c>0$ is a normalizing constant with
  \begin{eqnarray}
    \label{eq:mu_sigma2_lemma}
    \overline{μ} = \frac{σ_2^2 μ_1 + σ_1^2 μ_2}{σ_1^2+ σ_2^2} & \text{ and } &
                                                                               \overline{σ}^2 = \frac{σ_1^2σ_2^2}{σ_1^2+ σ_2^2}.
  \end{eqnarray}
\end{lemma}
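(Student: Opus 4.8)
The plan is to prove Lemma \ref{lem:product_gauss} by direct computation, completing the square in the exponent of the product of the two Gaussian densities. Writing $f(z) = (2\pi\sigma_1^2)^{-d/2}\exp(-|z-\mu_1|^2/(2\sigma_1^2))$ and similarly for $g$, the product $f(z)g(z)$ is (up to a $z$-independent constant) equal to $\exp\big(-\tfrac{1}{2}Q(z)\big)$ where $Q(z) = |z-\mu_1|^2/\sigma_1^2 + |z-\mu_2|^2/\sigma_2^2$. The core step is to rewrite $Q(z)$ in the form $|z-\overline{\mu}|^2/\overline{\sigma}^2 + (\text{constant independent of } z)$, since this immediately identifies $h(z) = c f(z) g(z)$, with $c$ chosen to normalize, as the density $\mathcal{N}(\overline{\mu}, \overline{\sigma}^2 \text{Id})$.

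First I would expand each squared norm and collect terms by powers of $z$: the quadratic term is $|z|^2(1/\sigma_1^2 + 1/\sigma_2^2) = |z|^2 (\sigma_1^2+\sigma_2^2)/(\sigma_1^2\sigma_2^2)$, which already reveals that $\overline{\sigma}^2 = \sigma_1^2\sigma_2^2/(\sigma_1^2+\sigma_2^2)$. The linear term in $z$ is $-2 z \cdot (\mu_1/\sigma_1^2 + \mu_2/\sigma_2^2)$, so matching against $-2 z \cdot \overline{\mu}/\overline{\sigma}^2$ and using the value of $\overline{\sigma}^2$ gives $\overline{\mu} = \overline{\sigma}^2(\mu_1/\sigma_1^2 + \mu_2/\sigma_2^2) = (\sigma_2^2\mu_1 + \sigma_1^2\mu_2)/(\sigma_1^2+\sigma_2^2)$, which is exactly \eqref{eq:mu_sigma2_lemma}. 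Completing the square then produces a leftover constant term (a multiple of $|\mu_1-\mu_2|^2$) that, together with the Gaussian prefactors, gets absorbed into the normalizing constant $c$; one need not compute it explicitly since the claim only asserts existence of $c>0$.

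Since everything decouples coordinatewise (both covariances are scalar multiples of the identity), it suffices to carry out the computation in dimension one and then note the $d$-dimensional statement follows by taking products over coordinates. There is no real obstacle here — the only mild bookkeeping point is to be careful that $c$ denotes a genuine normalizing constant (so that $h$ integrates to one and is therefore a probability density), rather than trying to track it through the algebra; stating that $c$ is ``the'' normalizing constant of the resulting Gaussian sidesteps this entirely. I would close by remarking that this is the standard ``product of Gaussians is Gaussian'' identity, and that the formulas \eqref{eq:mu_sigma2_lemma} are precisely the precision-weighted average of the means and the harmonic-type combination of the variances.
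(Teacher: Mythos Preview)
Your proposal is correct and follows essentially the same route as the paper: multiply the two Gaussian densities, complete the square in the exponent to read off $\overline{\mu}$ and $\overline{\sigma}^2$, and absorb the leftover $z$-independent factor into the normalizing constant $c$. The only cosmetic difference is that the paper first clears denominators (working with $A = \sigma_2^2|z-\mu_1|^2 + \sigma_1^2|z-\mu_2|^2$ over $2\sigma_1^2\sigma_2^2$) whereas you expand $Q(z)$ directly, but the algebra and the identification of \eqref{eq:mu_sigma2_lemma} are identical.
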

\begin{proof}
  We multiply the two Gaussian distributions:
  \begin{eqnarray*}
    f(z)⋅ g(z) &=& c \exp\left(-\frac{|z-μ_1|^2}{2σ_1^2}-\frac{|z-μ_2|^2}{2σ_2^2}\right) \\
               &=& c \exp\left(-\frac{σ_2^2|z-μ_1|^2 + σ_1^2|z-μ_2|^2}{2σ_1^2σ_2^2}\right)  \;=: c \exp\left(-\frac{A}{2σ_1^2σ_2^2}\right).
  \end{eqnarray*}
  Now we complete the square:
  \begin{eqnarray*}
    A &=& (σ_1^2+σ_2^2)|z|^2 - 2σ_2^2⟨μ_1,z⟩ - 2σ_1^2⟨μ_2,z⟩ + σ_1^2μ_1^2 + σ_2^2μ_2^2\\
      &=& s^2|z|^2 - 2s^2 ⟨\overline{μ},z⟩ + σ_1^2μ_1^2 + σ_2^2μ_2^2
  \end{eqnarray*}
  with $s^2=σ_1^2+σ_2^2$ and $\overline{μ}$ given by \eqref{eq:mu_sigma2_lemma}. Thus,
  \begin{eqnarray*}
    A &=& s^2|z-\overline{μ}|^2 \; - s^2|\overline{μ}|^2 +  σ_1^2μ_1^2 + σ_2^2μ_2^2.
  \end{eqnarray*}
  We deduce that
  \begin{eqnarray*}
    f(z)⋅ g(z) &=& \tilde{c}\exp\left(-\frac{s^2|z-\overline{μ}|^2}{2σ_1^2σ_2^2}\right) = \tilde{c}\exp\left(-\frac{|z-\overline{μ}|^2}{2\overline{σ}^2}\right),
  \end{eqnarray*}
  with $\overline{σ}^2$ given by \eqref{eq:mu_sigma2_lemma}.
\end{proof}

\bibliographystyle{plain}
\bibliography{reverse_diff}

\end{document}